\newcommand{\ld} {\langle}
\newcommand{\rd} {\rangle}
\newcommand{\mc} {\mathcal}
\renewcommand{\epsilon}{\varepsilon}
\newcommand{\la} {\lambda}
\newcommand{\noi} {\noindent}
\newcommand{\mb}{\mathbb}
\newcommand{\ds} {\displaystyle}
\newtheorem{defi}{Definition}[section]
\newtheorem{thm}{Theorem}[section]
\newtheorem{thmlet}{Theorem}
\newtheorem{lem}{Lemma}[section]
\newtheorem{pro}{Proposition}[section]
\newtheorem{cor}{Corollary}[section]
\def\theequation{\@arabic{\c@section}.\@arabic{\c@equation}}
\def\R{{I\!\!R}}
\date{}
\title[Fractional Kirchhoff problem]{Fractional Kirchhoff problem with critical indefinite nonlinearity}
\author[ P. K. Mishra]{Pawan Kumar Mishra}
\author[J. M. do \'O]{Jo\~ao Marcos do \'O }
\author[X. He]{Xiaoming He}
\address[P. K. Mishra]{ Department of Mathematics, Federal University of Para\'{i}ba\newline\indent 58051-900, Jo\~ao Pessoa-PB, Brazil }
\email{\href{mailto:pawanmishra31284@gmail.com}{pawanmishra31284@gmail.com}}
\address[J. M. do \'O]{ Department of Mathematics, Federal University of Para\'{i}ba\newline\indent 58051-900, Jo\~ao Pessoa-PB, Brazil }
\email{\href{mailto:jmbo@pq.cnpq.br}{jmbo@pq.cnpq.br}}
\address[X. He]{ Department of Mathematics,College of Science, Minzu University of China\newline\indent 100081, Beijing, P.R. of China }
\email{\href{mailto:xmhe923@muc.edu.cn}{xmhe923@muc.edu.cn}}
\thanks{Research supported in part by INCTmat/MCT/Brazil, CNPq and CAPES/Brazil.}
\subjclass[2010]{35A15, 35R11,35B33 }
\keywords{Fractional Laplacian, Kirchhoff type problem, critical exponent.}
\begin{document}
\begin{abstract}
We study the existence and multiplicity of positive solutions for a family of fractional Kirchhoff equations with critical nonlinearity of the form
\begin{equation*}
M\left(\int_\Omega|(-\Delta)^{\frac{\alpha}{2}}u|^2dx\right)(-\Delta)^{\alpha} u= \lambda f(x)|u|^{q-2}u+|u|^{2^*_\alpha-2}u\;\; \text{in}\; \Omega,\;u=0\;\textrm{in}\;\mathbb R^n\setminus \Omega,
\end{equation*}
where $\Omega\subset \mathbb R^n$ is a smooth bounded domain, $ M(t)=a+\varepsilon t, \; a, \; \varepsilon>0,\;
0<\alpha<1,  \; 2\alpha<n<4\alpha$ and $ \; 1<q<2$. Here $2^*_\alpha={2n}/{(n-2\alpha)}$ is the fractional critical Sobolev exponent,  $\lambda$ is a positive parameter and the coefficient $f(x)$ is a real valued continuous function which  is allowed to change sign. By using a variational approach based on the idea of Nehari manifold technique, we combine effects of a sublinear and a superlinear term to prove our main results.
\end{abstract}
\maketitle
\section{Introduction}
\noindent
This paper is concerned with the existence and multiplicity of positive solutions for the
 family of fractional Kirchhoff equations
\begin{equation}\label{Q}
  M\left(\int_\Omega|(-\Delta)^{\frac{\alpha}{2}}u|^2dx\right)(-\Delta)^{\alpha} u = \lambda f(x)|u|^{q-2}u+|u|^{2^*_\alpha-2}u\;\; \text{in}\; \Omega,\;u=0\;\textrm{in}\;\mathbb R^n\setminus \Omega,
\end{equation}
 where $ \Omega\subset \mathbb R^n$ is a smooth bounded domain,
  $M(t)=a+\varepsilon t, \; a,\;  \varepsilon>0, \;  0<\alpha<1,\; 2\alpha<n<4\alpha,\;  1<q<2$ and
  $2^*_\alpha={2n}/{(n-2\alpha)} $ is the fractional critical  Sobolev exponent. Here the coefficient $f(x)$ is a real valued continuous function which is allowed to change sign,  $\lambda$ is a positive parameter and $(-\Delta)^{\alpha}$ is the fractional Laplacian operator defined in section 2.
   A basic feature of the family of problems considered here is that it has double nonlocal structure
   due to the presence of the fractional Laplacian and the nonlocal Kirchhoff function $M$ which makes the equation no longer
   a pointwise identity. Moreover, the nonlinear term
$g_{\lambda}(x,s)=\lambda f(x)|s|^{q-2}s+|s|^{2^*_\alpha-2}s $ is sublinear� at $0$ and superlinear at $\infty$ with critical growth.

\noi A lot of attention has been given to the study of elliptic equations involving
fractional Laplace operator because of pure mathematical research and its wide range of applications in many branches of Science. Non-local operators naturally arise in continuum mechanics, phase transition phenomena, population dynamics and game theory, see \cite{MR3289358} and references therein. Fractional operators are also involved in financial mathematics, where Levy processes with jumps appear in modeling the asset prices, see \cite{MR2512800}. The fractional Laplacian is the infinitesimal generator of L\'evy stochastic processes.  This is also of interest in Fourier analysis where it is defined as a pseudo-differential operator. Moreover, these operators arise in a quite natural way in many different physical situations in which one has to consider long range anomalous diffusions and transport in highly heterogeneous medium.
In the last decade many authors studied the existence and multiplicity of solutions for nonlocal problems involving fractional powers of  Laplacian $(-\Delta)^\alpha,  \alpha\in (0,1) $. We cite \cite{MR2646117, MR2354493, MR3379042, MR3271254, MC, MR3165278, MR2911424, MR3345604} with no attempts to provide the complete list of references. There are several works related to nonlocal problems of Kirchhoff type  involving fractional Laplacian, see \cite{MR3120682, MR3373607, MR3470662, MR3456737} and references therein. In \cite{MR3120682}, authors have given the motivation for the fractional Kirchhoff type operators by studying the string vibrations. Moreover, using the concentration-compactness principle  authors have proved the existence result for the  critical exponent problem with superlinear perturbation.\\
In the case \textbf{$M\equiv 1$} and \textbf{$\alpha=1$}, T. F. Wu \cite{MR2373222} has considered the following local problem
\begin{align}\label{8w}
-\Delta u=\lambda f(x)|u|^{q-2}u+|u|^{2^*-2}u, \;\;u\in H^1_0(\Omega),
\end{align}
where $\Omega\subset \mathbb R^n$, $n\geq3, 2>q>1$, $f(x)$ is continuous sign changing weight, $\lambda>0$ is a parameter and $2^*={2n}/{(n-2)}$ is the critical Sobolev exponent. Using the Nehari manifold technique, the following result has been obtained.
\begin{thmlet}
There exists a $\lambda_0>0$ such that problem \eqref{8w} has at least two positive solutions for $\lambda \in (0, \lambda_0)$.
\end{thmlet}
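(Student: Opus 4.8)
The plan is to use the Nehari manifold method coupled with the fibering-map analysis of Dr\'abek and Pohozaev. Associate with \eqref{8w} the energy functional
\begin{equation*}
I_\lambda(u)=\frac12\int_\Omega|\nabla u|^2\,dx-\frac{\lambda}{q}\int_\Omega f(x)|u|^q\,dx-\frac{1}{2^*}\int_\Omega|u|^{2^*}\,dx
\end{equation*}
on $H^1_0(\Omega)$, and consider the Nehari manifold $\mathcal N_\lambda=\{u\in H^1_0(\Omega)\setminus\{0\}:\ld I_\lambda'(u),u\rd=0\}$. Although $I_\lambda$ is unbounded below on $H^1_0(\Omega)$, on $\mathcal N_\lambda$ the Nehari constraint gives $I_\lambda(u)=\tfrac1n\|u\|^2-\lambda\tfrac{2^*-q}{q\,2^*}\int_\Omega f|u|^q\ge \tfrac1n\|u\|^2-\lambda C\|u\|^q$, which is coercive and bounded below because $1<q<2<2^*$. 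For each $u\neq0$ I would study the fibre $\phi_u(t)=I_\lambda(tu)$, whose positive critical points are exactly the points $tu\in\mathcal N_\lambda$, and split $\mathcal N_\lambda=\mathcal N_\lambda^+\cup\mathcal N_\lambda^0\cup\mathcal N_\lambda^-$ according to the sign of $\phi_u''(1)$, distinguishing fibre minima, inflections, and maxima.

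The first step is to show that $\mathcal N_\lambda^0=\emptyset$ for all small $\lambda$, say $\lambda\in(0,\lambda_0)$. This follows from the two algebraic expressions for $\phi_u''(1)$ obtained by eliminating, in turn, the critical and the sublinear term from the Nehari constraint, combined with the Sobolev inequality $\int_\Omega|u|^{2^*}\le S^{-2^*/2}\|u\|^{2^*}$ and the boundedness of $f$. With $\mathcal N_\lambda^0=\emptyset$ the manifold separates cleanly, and a Lagrange-multiplier argument shows that any minimiser of $I_\lambda$ over $\mathcal N_\lambda^+$ or $\mathcal N_\lambda^-$ is in fact a free critical point of $I_\lambda$, hence a weak solution of \eqref{8w}.

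For the first solution I would minimise $I_\lambda$ over $\mathcal N_\lambda^+$. Since $f$ changes sign but $\int_\Omega f|u|^q>0$ for suitable test functions, one checks $\inf_{\mathcal N_\lambda^+}I_\lambda<0$; coercivity plus Ekeland's variational principle then yield a minimising sequence which, being kept off the empty set $\mathcal N_\lambda^0$, converges to a minimiser $u_1\in\mathcal N_\lambda^+$ at a strictly negative level, below the critical compactness threshold, so no bubbling occurs. The second solution is sought by minimising over $\mathcal N_\lambda^-$, and \emph{this is where the main obstacle lies}: the critical exponent makes $I_\lambda$ fail the Palais--Smale condition globally, and compactness is recovered only below a threshold of the form $c^*=\tfrac1n S^{n/2}+\inf_{\mathcal N_\lambda^+}I_\lambda$. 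The crux is therefore a Brezis--Nirenberg type estimate proving $\inf_{\mathcal N_\lambda^-}I_\lambda<c^*$: one inserts the Aubin--Talenti extremal functions (truncated and rescaled) into the fibre through $u_1$ and controls the lower-order interaction terms, exploiting $n\ge3$ and $q<2$ to push the fibre maximum strictly below $c^*$. A localised Palais--Smale argument then produces a minimiser $u_2\in\mathcal N_\lambda^-$.

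Finally, to obtain \emph{positive} solutions I would work with the positive part of $f|u|^q$ or use $I_\lambda(|u|)\le I_\lambda(u)$ to take the minimisers nonnegative, and then invoke the strong maximum principle to conclude $u_1,u_2>0$ in $\Omega$; since $\mathcal N_\lambda^+$ and $\mathcal N_\lambda^-$ are disjoint, $u_1\neq u_2$, giving two positive solutions for $\lambda\in(0,\lambda_0)$. I expect the threshold estimate on $\mathcal N_\lambda^-$ to be the decisive and most delicate point, because the balance between the concentrating critical bubble and the sign-changing sublinear perturbation must be tracked carefully.
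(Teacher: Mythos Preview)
Your outline is correct and matches the Nehari manifold/fibering-map strategy of Wu~\cite{MR2373222}, which is exactly what the paper invokes for Theorem~A; note, however, that Theorem~A is \emph{cited} in the paper as background, not proved there, so there is no ``paper's own proof'' to compare against beyond the attribution to Wu. The present paper does carry out the same scheme in full detail for its fractional Kirchhoff problem (Theorem~1.1), and every step you list---emptiness of $\mathcal N_\lambda^0$ for small $\lambda$, Lagrange-multiplier reduction, Ekeland minimisation on $\mathcal N_\lambda^+$ giving a negative-energy solution, and the Brezis--Nirenberg threshold estimate on $\mathcal N_\lambda^-$ via Aubin--Talenti bubbles perturbed around the first solution---appears there in the same order and with the same role, so your plan is faithful to both the cited source and the paper's own template.

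One small refinement: your compactness threshold $c^*=\tfrac1n S^{n/2}+\inf_{\mathcal N_\lambda^+}I_\lambda$ is the commonly stated form, but in Wu's argument (and in the paper's fractional Kirchhoff analogue, Proposition~3.2) the threshold is written as $\tfrac1n S^{n/2}$ minus an explicit $\lambda^{2/(2-q)}$-term rather than plus the (negative) infimum; the two are morally the same since $\inf_{\mathcal N_\lambda^+}I_\lambda$ is of order $-\lambda^{2/(2-q)}$, but you should be aware that the concentration-compactness bookkeeping is done directly with the $\lambda$-power, not with $I_\lambda(u_1)$ itself.
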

\noi In the case $M\equiv 1$ and $\alpha \in (0, 1)$  similar results for fractional Laplacian has been studied in \cite{MC}. In \cite{MC}, authors have considered the following problem
\begin{align}\label{rw}
(-\Delta)^\alpha u=\lambda f(x)|u|^{q-2}u+|u|^{2_\alpha^*-2}u\;\textrm{in}\;\Omega, \;\;u=0\;\;\;\;\;\;\textrm{on}\;\mathbb R^n\setminus \Omega,
\end{align}
where $\Omega\subset \mathbb R^n$, $n\geq2>q>1$, $f(x)$ is continuous sign changing weight, $\lambda>0$ is a parameter and $2^*_\alpha $ is the fractional critical Sobolev exponent. Using the harmonic extension technique authors have extended the multiplicity results obtained in \cite{MR2373222} to the nonlocal problem \eqref{rw}.\\
\noi In the case $M\not \equiv 1$ and $\alpha=1$, there is a lot of work
 addressed by many researchers, see \cite{MR2763559, MR3490847, MR3210026} and references therein. Recently in \cite{MR3490847}, authors have shown the multiplicity result for Kirchhoff type problems, without assuming any sign changing weight, with the restriction on the coefficient of the Kirchhoff term. Precisely authors have considered the following problem in $\mathbb R^3$ with $f(x)\equiv 1$
 \begin{align*}
-\left(a+\varepsilon \displaystyle \int_\Omega |\nabla u|^2dx\right)\Delta u(x)=u^5+\lambda u^{q-1}, \;u>0\;\textrm{in}\;\Omega, \;\;u=0\;\;\;\;\;\;\textrm{on}\; \partial\Omega,
\end{align*}
 where $\Omega\subset \mathbb R^3$ is smooth bounded domain, $a>0, \varepsilon>0$ is sufficiently small and $\lambda>0$ is a positive parameter. The following multiplicity result was proved
 \begin{thmlet}
 Assume $a > 0, 1<q <2$ and $\varepsilon> 0$ is sufficiently small. Then there exists $\lambda_* > 0$ such that
for any $\lambda\in (0, \lambda_*)$, problem \eqref{rw} has at least two positive solutions, and one of the solutions is a positive ground state solution.
 \end{thmlet}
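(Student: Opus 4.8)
The plan is to treat the problem variationally through the energy functional
\[
I_\lambda(u)=\frac{a}{2}\|u\|^2+\frac{\varepsilon}{4}\|u\|^4-\frac{\lambda}{q}\int_\Omega (u^+)^q\,dx-\frac16\int_\Omega (u^+)^6\,dx,
\]
defined on $H_0^1(\Omega)$, where $\|u\|^2=\int_\Omega|\nabla u|^2\,dx$ and $u^+=\max\{u,0\}$. Nontrivial critical points of $I_\lambda$ are weak solutions, and testing the equation against $u^-$ together with the strong maximum principle forces any such solution to be positive, so it suffices to produce two distinct nontrivial critical points. Since the critical term makes $I_\lambda$ unbounded below on $H_0^1(\Omega)$, I would restrict to the Nehari manifold $\mc N_\lambda=\{u\neq 0:\ld I_\lambda'(u),u\rd=0\}$ and study the fibering map $\phi_u(t)=I_\lambda(tu)$. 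Its positive stationary points satisfy $\lambda\int_\Omega(u^+)^q\,dx=\psi_u(t)$ with
\[
\psi_u(t)=a\,t^{2-q}\|u\|^2+\varepsilon\,t^{4-q}\|u\|^4-t^{6-q}\int_\Omega(u^+)^6\,dx,
\]
and the sign of $\phi_u''$ at these points splits $\mc N_\lambda$ into the standard pieces $\mc N_\lambda^+,\mc N_\lambda^0,\mc N_\lambda^-$.

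The first step is to show that $\mc N_\lambda^0=\emptyset$ for all small $\lambda$, so that $\mc N_\lambda=\mc N_\lambda^+\cup\mc N_\lambda^-$ is a disjoint union of smooth manifolds on which $I_\lambda$ is bounded below and is a natural constraint. Because $1<q<2<6$, the function $\psi_u$ vanishes at $t=0$, tends to $-\infty$ as $t\to\infty$, and (writing $\psi_u'=0$ as a downward quadratic in $t^2$) possesses a single interior maximum for every $u$ with $\int_\Omega(u^+)^6\,dx>0$. Requiring $\lambda\int_\Omega(u^+)^q\,dx$ to remain strictly below this maximal value yields exactly two positive roots of $\phi_u'$ --- a local minimum in $\mc N_\lambda^+$ and a local maximum in $\mc N_\lambda^-$ --- and rules out the degenerate case. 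Combining H\"older's and the Sobolev inequalities to bound $\int_\Omega(u^+)^q\,dx$ and $\int_\Omega(u^+)^6\,dx$ in terms of $\|u\|$ converts this requirement into an explicit threshold $\lambda_*=\lambda_*(a,\varepsilon)$.

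With the decomposition available I would minimise $I_\lambda$ separately over $\mc N_\lambda^+$ and $\mc N_\lambda^-$. On $\mc N_\lambda^+$ the infimum is negative and is attained by a function $u_1$ which is in fact a local minimiser of $I_\lambda$ on $H_0^1(\Omega)$; the constraint being inactive there, $u_1$ is a genuine critical point and, by comparison of energies, a ground state. Compactness here is cheap since the level is negative, well below any energy at which a Palais--Smale sequence can lose mass through concentration. The second solution comes from $c^-:=\inf_{\mc N_\lambda^-}I_\lambda$. To realise it I would first establish, via a concentration--compactness analysis of Palais--Smale sequences, that $I_\lambda$ satisfies $(PS)_c$ for every $c$ below a threshold $c^*=c^*(a,\varepsilon)>0$ that tends to $\tfrac{a^{3/2}}{3}S^{3/2}$ as $\varepsilon\to0$, where $S$ is the best Sobolev constant; then $c^-<c^*$ guarantees the infimum is attained at some $u_2\in\mc N_\lambda^-$.

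The main obstacle is exactly the strict energy estimate $c^-<c^*$. I would test with suitably truncated and rescaled Aubin--Talenti instantons $u_\delta$, maximise $I_\lambda$ along the fibre $t\mapsto I_\lambda(tu_\delta)$, and expand the result in the concentration parameter $\delta$. In dimension $n=3$ (the case $2\alpha<n<4\alpha$ with $\alpha=1$, i.e. $2<3<4$) the delicate point is to track how the Kirchhoff contribution $\tfrac{\varepsilon}{4}\|u_\delta\|^4$ and the sublinear term enter these expansions: the sublinear term strictly lowers the maximal fibre energy, while the smallness of $\varepsilon$ is precisely what keeps the convex quartic from lifting the level back above $c^*$. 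Once $c^-<c^*$ is secured, $u_2$ is a second critical point, distinct from $u_1$ since $\mc N_\lambda^+\cap\mc N_\lambda^-=\emptyset$, and the maximum principle renders both positive, completing the proof.
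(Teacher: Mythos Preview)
Theorem~B is quoted in this paper from \cite{MR3490847} and is not itself proved here; the paper's own contribution is the fractional, sign-changing analogue Theorem~\ref{22mht1}. Comparing your proposal to \emph{that} proof: your overall architecture---Nehari decomposition $\mc N_\lambda=\mc N_\lambda^+\cup\mc N_\lambda^-$, fibering analysis to show $\mc N_\lambda^0=\emptyset$ for small $\lambda$, a negative-energy minimiser on $\mc N_\lambda^+$ as ground state, a concentration--compactness threshold for Palais--Smale sequences, and an instanton estimate to place $c^-$ strictly below that threshold---matches the paper's Lemmas~\ref{l22.2}, \ref{cbb}, \ref{L37}, \ref{3a}, Proposition~\ref{crcmp} and Lemma~\ref{II} essentially step for step.

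The one substantive difference is in how the critical energy estimate is obtained. You propose to test directly along the fibre $t\mapsto I_\lambda(tu_\delta)$ and to rely on the sublinear contribution $-\tfrac{\lambda}{q}t^q\int u_\delta^q$ to pull $\sup_t I_\lambda(tu_\delta)$ below $c^*$. The paper instead centres the test at the first solution and expands $r\mapsto I_\lambda(w_0+ru_\delta)$. The payoff of the paper's choice is that the expansion of $|w_0+ru_\delta|^{2^*}$ produces an interaction term $-C\int u_\delta^{2^*-1}\sim -C\delta^{(n-2)/2}=-C\delta^{1/2}$ (in dimension three), which is of strictly lower order than both the cutoff error $O(\delta)$ and the Kirchhoff correction $O(\varepsilon)$; with the coupling $\delta=\varepsilon=\lambda^{2/(2-q)}$ the strict inequality $c^-<c^*$ then follows for all small $\lambda$ without any constant balancing. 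Your direct fibre instead yields the negative correction $-c\lambda\int u_\delta^q\sim -c\lambda\delta^{q/2}$ with $q/2\in(\tfrac12,1)$; after optimising in $\delta$ this gain is only of order $\lambda^{2/(2-q)}$, the same order as the $\lambda$-dependent shift already built into $c^*$ and as the admissible size of $\varepsilon$. The desired inequality then reduces to a comparison of absolute constants that is not transparently favourable. Either route can ultimately be made to work, but the $w_0$-centred test functions used in the paper give a cleaner margin and avoid that delicate comparison.
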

\noi In this paper we have studied the multiplicity results for the case $ M\not \equiv 1$ and $\alpha \in (0, 1)$ with a restriction on the coefficient  of Kirchhoff term for sign changing weight. We face some  difficulties in solving this type of problems. First one is the presence of Kirchhoff term in the energy functional which makes study of Palais-Smale sequence rather complicated because the weak limit of minimizing sequence is no more a weak solution of the problem as in the case for $M\equiv 1$. We need strong convergence in order to show that the limit of minimizing sequence is the minimizer of the energy functional in case of Kirchhoff problems. The second one is the lack of compactness because of the critical Sobolev growth. The Sobolev embedding is continuous but not compact for critical exponent, $2^*_\alpha$. Apart from this we have nonlocal nature of the problem because of presence of nonlocal fractional operator as well. Motivated by the work of Caffarelli and Silvestre \cite{MR2354493}, we have considered an equivalent definition of the fractional operator in a bounded domain with zero Dirichlet boundary
data by means of an auxiliary variable but in the process we lack of explicit form of extremal functions. We have used the estimates as in \cite{MR2911424, MR3117361} on the extension of the extremal functions while using concentration compactness Lemma due to Lions \cite{MR834360} to get the compactness of Palais-Smale sequence. We have adopted the idea of Nehari manifold to obtain the existence of two solutions for suitable choice of positive parameters $\lambda$ and sufficiently small $\varepsilon>0$. For the details related to Nehari manifold and fibering map analysis, see \cite {MR1998965, MR2373222, MR2557956}. To the best of our knowledge, the multiplicity results  for fractional Kirchhoff type problems with critical exponent and sign changing weight, obtained in this paper, has not been established before. \\
 \noi With this introduction, we state the main result of the paper in the form of following theorem.
\begin{thm}\label{22mht1}
Assume $a>0$ and $\varepsilon>0$ sufficient small. Then we have the following
 \begin{enumerate}[label=(\roman*)]
\item  There exists a $\lambda_0>0$ such that Problem \eqref{Q} has at least one positive solution with negative energy for $\lambda \in (0, \lambda_0)$.
\item{There exists a $\lambda_{00}>0$ such that for $0<\lambda<\lambda_{00}\leq\lambda_0$,  Problem \eqref{Q} has at least two positive solutions.}
\end{enumerate}
\end{thm}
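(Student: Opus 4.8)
The plan is to use the Nehari manifold and fibering-map method adapted to this doubly nonlocal setting, together with a concentration-compactness analysis to recover compactness at the critical level. Working in the Caffarelli--Silvestre extension space $X_0$ (so that a critical point of the extended energy traces back to a weak solution of \eqref{Q}), the relevant functional is
$$
I_\lambda(u)=\frac{a}{2}\|u\|^2+\frac{\varepsilon}{4}\|u\|^4-\frac{\lambda}{q}\int_\Omega f|u|^q\,dx-\frac{1}{2^*_\alpha}\int_\Omega|u|^{2^*_\alpha}\,dx,
$$
where $\|u\|^2=\int_\Omega|(-\Delta)^{\alpha/2}u|^2dx$. I introduce the Nehari manifold $\mathcal{N}_\lambda=\{u\neq 0:\langle I_\lambda'(u),u\rangle=0\}$ and, for fixed $u$, the fibering map $\phi_u(t)=I_\lambda(tu)$; since $tu\in\mathcal{N}_\lambda$ iff $\phi_u'(t)=0$, I split $\mathcal{N}_\lambda=\mathcal{N}_\lambda^+\cup\mathcal{N}_\lambda^0\cup\mathcal{N}_\lambda^-$ according to the sign of $\phi_u''(1)$.

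First I would carry out the fibering analysis. Writing $A=\|u\|^2$, $B=\int_\Omega f|u|^q$, $C=\int_\Omega|u|^{2^*_\alpha}$, the equation $\phi_u'(t)=0$ becomes $aAt+\varepsilon A^2t^3-\lambda B t^{q-1}-Ct^{2^*_\alpha-1}=0$, and analyzing the shape of $\phi_u$ (using $1<q<2<4<2^*_\alpha$, the last inequality being equivalent to $n<4\alpha$) shows that for $\lambda$ below an explicit threshold $\lambda_0$ one has $\mathcal{N}_\lambda^0=\emptyset$, so $\mathcal{N}_\lambda$ is a $C^1$ manifold splitting into the two pieces $\mathcal{N}_\lambda^\pm$, each carrying a well-defined minimization problem. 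A standard computation then gives that $I_\lambda$ is coercive and bounded below on $\mathcal{N}_\lambda$, that $c^+:=\inf_{\mathcal{N}_\lambda^+}I_\lambda<0$, and that $\mathcal{N}_\lambda^-$ stays uniformly away from $0$.

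For part (i) I would take a minimizing sequence in $\mathcal{N}_\lambda^+$ and apply the Ekeland variational principle on $\mathcal{N}_\lambda$ to produce a Palais--Smale sequence at the negative level $c^+$. Since $c^+<0$ lies below the first compactness threshold, the minimizer is attained, and using $\mathcal{N}_\lambda^0=\emptyset$ the Lagrange-multiplier rule forces the limit $u_1\in\mathcal{N}_\lambda^+$ to be a genuine critical point of $I_\lambda$ with $I_\lambda(u_1)=c^+<0$; replacing $u_1$ by $|u_1|$ and invoking the strong maximum principle for $(-\Delta)^\alpha$ makes it a positive solution.

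The heart of the argument, and the main obstacle, is part (ii): producing a second solution on $\mathcal{N}_\lambda^-$, where the critical term is active. Two things must be done. First, an energy estimate $c^-:=\inf_{\mathcal{N}_\lambda^-}I_\lambda<c^*$, where $c^*$ is the compactness threshold governed by the best fractional Sobolev constant $S$ and the coefficient $a$. I would test with truncated, rescaled extremals for $S$ (for which only extension estimates, not explicit formulas, are available, as in the cited references), and here the dimension restriction $2\alpha<n<4\alpha$ --- equivalently $2^*_\alpha>4$ --- together with $\varepsilon$ sufficiently small is exactly what guarantees that the Kirchhoff contribution $\tfrac{\varepsilon}{4}\|u_\varepsilon\|^4$ is a lower-order perturbation, so that the strict inequality survives. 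Second, one must upgrade weak to strong convergence for a $(PS)_{c^-}$ sequence: this is where the nonlocal Kirchhoff factor $M(\|u_k\|^2)$ obstructs the usual argument, since the weak limit need not satisfy the equation unless $\|u_k\|\to\|u\|$. I would resolve this by combining the Brezis--Lieb lemma with Lions' concentration-compactness lemma to show that below $c^*$ no mass escapes, which forces strong convergence; the limit $u_2\in\mathcal{N}_\lambda^-$ is then a second positive solution, distinct from $u_1$ because the two Nehari components are disjoint.
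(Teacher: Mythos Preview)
Your overall architecture matches the paper's: Caffarelli--Silvestre extension, Nehari manifold split via fibering maps, emptiness of $\mathcal N_\lambda^0$ for small $\lambda$, Ekeland plus Lions-type concentration--compactness below an explicit threshold, a first solution in $\mathcal N_\lambda^+$ with negative energy, and a second in $\mathcal N_\lambda^-$. Your reading of the hypothesis $n<4\alpha$ (equivalently $2^*_\alpha>4$) as the mechanism keeping the Kirchhoff quartic subordinate is also the paper's.

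The one substantive divergence is how you plan to force $c^-$ below the compactness threshold. You propose to test with the truncated extremal $u_\epsilon$ alone and let the sublinear term together with ``$\varepsilon$ small'' absorb the Kirchhoff contribution. The paper does something sharper: it tests with $w_0+r\,w_{\epsilon,\eta}$, i.e.\ it perturbs the \emph{first solution} by the extremal, and in addition couples the Kirchhoff parameter to the concentration scale via $\varepsilon=\lambda^{2/(2-q)}$. This is not cosmetic. First, the paper's compactness threshold is not the bare Sobolev quantity but
\[
c_0=\Bigl(\tfrac12-\tfrac{1}{2^*_\alpha}\Bigr)(a\kappa_\alpha S)^{\frac{2^*_\alpha}{2^*_\alpha-2}}-D\,\lambda^{\frac{2}{2-q}},
\]
the negative correction coming from the sublinear piece carried by the weak limit in the energy splitting; you must beat $c_0$, not merely the first term. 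Second, expanding $|w_0+rw_{\epsilon,\eta}|^{2^*_\alpha}$ and using $w_0>0$ produces a cross term $\int w_0\,|w_{\epsilon,\eta}|^{2^*_\alpha-1}\gtrsim \epsilon^{(n-2\alpha)/2}$, and it is exactly this exponent (strictly less than $1$, since $n-2\alpha<2$) that, after the coupling $\varepsilon=\epsilon=\lambda^{2/(2-q)}$, dominates both the $O(\varepsilon)$ Kirchhoff term and the $\lambda^{2/(2-q)}$ correction in $c_0$. If you test with the extremal alone, the only negative correction available is $\lambda\int f|u_\epsilon|^q=O(\lambda\,\epsilon^{q(n-2\alpha)/2})$, which is of smaller order and does not obviously clear the $\lambda$-corrected threshold. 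So either adopt the paper's test function $w_0+r\,w_{\epsilon,\eta}$ (and the associated path argument that splits $H^1_{0,L}(\mathcal C)\setminus\mathcal N_\lambda^-$ into two components $W_1,W_2$ and runs a curve from $w_0\in W_1$ to $w_0+l_0w_{\epsilon,\eta}\in W_2$ across $\mathcal N_\lambda^-$), or supply a separate estimate showing your direct route still lands below $c_0$.
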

\noindent The paper is organized as follows. In section 2, we have discussed the variational formulation of the problem and the functional setting. In section 3, we have discussed Nehari manifold and fibering map analysis and proved the existence of first solution. In section 4, we have shown the existence of second solution and concluded the proof of our Theorem \ref{22mht1}.

\section{Variational formulation and functional setting}
\noi The fractional powers of Laplacian, $(-\Delta)^{\alpha}$, in a bounded domain $\Omega$ with zero Dirichlet boundary data are defined through the spectral decomposition using the powers of the eigenvalues of the
Laplacian operator. Let $(\varphi_j,\rho_j)$ be the eigen pair of $(-\Delta)$ in $\Omega$ with zero Dirichlet boundary data. Then
$(\varphi_j,\rho_j^{\alpha})$ is the eigen pair of
$(-\Delta)^{\alpha}$ with  Dirichlet boundary conditions. In fact, the
fractional Laplacian $(-\Delta)^{\alpha}$ is well defined in the space of
functions
$$
H^{\alpha}_0(\Omega)=\left\{u=\sum a_j
  \varphi_j\in L^2(\Omega)\  :\  \|u\|_{H^{\alpha}_0(\Omega)}=
  \left(\sum a_j^2\rho_j^{\alpha}\right)^{1/2}<\infty\right\}
$$
and, as a consequence,
$$
(-\Delta)^{\alpha}u=\sum
  a_j\rho_j^{\alpha}\varphi_j\,.
$$
Note that then
$\|u\|_{H^{\alpha}_0(\Omega)}=\|(-\Delta)^{\alpha/2}u\|_{L^2(\Omega)}$. The dual space $H^{-\alpha}(\Omega)$ is defined in the standard way, as well as the inverse operator $(-\Delta)^{-\alpha}$.
The variational functional associated to the problem $\eqref{Q}$ is given as
\begin{equation*}\label{fpl}
\mc J_{M,\lambda}(u)=\frac{1}{2}\widehat M\left(\int_\Omega|(-\Delta)^{\frac{\alpha}{2}}u|^2dx\right)-\frac{\lambda}{q}\int_\Omega f(x)|u|^q dx-\frac{1}{2^*_\alpha}\int_\Omega |u|^{2^*_\alpha} dx,
\end{equation*}
where $\widehat M(t)=\displaystyle \int_0^t M(s)ds$ is the primitive of $M$.
\begin{defi}
A function $u\in H^\alpha_{0}(\Omega)$ is called a weak solution of the problem \eqref{Q} if for all $\phi \in H^\alpha_0(\Omega)$ the following holds
\begin{equation*}
M\left(\int_\Omega|(-\Delta)^{\frac{\alpha}{2}}u|^2dx\right)\displaystyle \int_\Omega (-\Delta)^\frac{\alpha}{2} u (-\Delta)^\frac{\alpha}{2}\phi dx=\lambda \int_\Omega f(x)|u|^{q-2}u\phi dx+\int_\Omega |u|^{2^*_\alpha-2} u \phi dx.
\end{equation*}
\end{defi}

\noindent Recently a powerful technique is developed by Caffarelli and Silvestre \cite{MR2354493} to treat the nonlocal problems involving fractional Laplacian. In this technique, we study an extension problem corresponding to a nonlocal problem so that we can investigate the nonlocal problem via classical variational methods. In this work we use this harmonic extension technique. We first define the harmonic extension of $u\in H^\alpha_0(\Omega)$.
 \begin{defi}
 For $u\in H^\alpha_0(\Omega)$, the harmonic extension $E_{\alpha}(u):=w$ is the solution of the following problem
 \begin{equation*}
\left\{\begin{array}{rlll}
-\mathrm{div}(y^{1-2\alpha}\nabla w) &=0 \; \;\; \text{in}\;\;\mathcal C\;\;=\Omega\times (0, \infty),\\
w&=0\;\;\; \text{on}\;\; \partial_L=\partial \Omega\times (0, \infty),\\
w&=u\;\;\; \text{on}\; \;\;\;\;\;\;\;\;\;\;\Omega \times\{0\},
\end{array}
\right.
\end{equation*}
where $\partial_L$ denotes the lateral boundary of the cylinder $\Omega \times (0, \infty)$. Moreover, the extension function is related with fractional Laplacian by
\[
(-\Delta)^\alpha u(z)=-\kappa_\alpha\displaystyle \lim_{y\rightarrow 0^+}y^{1-2\alpha}\frac{\partial w}{\partial y}(z,y),
\]
where $\kappa_\alpha$ is a normalization constant.
 \end{defi}
 \noi The solution space for the extension problem is
$$
H_{0, L}^1(\mathcal{C})=\left\{w\in
L^{2}(\mathcal{C})\,:\: w=0 \mbox{ on }
\partial_L,\;  \|w\|_{H_{0, L}^1(\mathcal{C})}:=\|w\|=
\left(\kappa_\alpha\int_{\mathcal{C}} y^{1-2\alpha}|\nabla
w|^2\right)^{1/2}<\infty\right\}.
$$
We observe that the extension operator is an isometry
between $H_0^{\alpha}(\Omega)$ and
$H_{0, L}^1(\mathcal{C})$. That is
\begin{equation}\label{equival norm}
\|E_\alpha(u)\|=\|u\|_{H^{\alpha}_{0}(\Omega)}\,
, \quad \forall \, u\in H_{0}^{\alpha}(\Omega).
\end{equation}
This isometry in \eqref{equival norm} is the key to study the Kirchhoff type problems in the harmonic extension set up. Moreover, we have the following trace inequality.\\

\noi \textbf{Trace  inequality:} For any function $\psi\in
H_{0, L}^1(\mathcal{C}),$ it holds $\|\psi(\cdot,0)\|_{H^{\alpha}_{0}(\Omega)}\leq\|\psi\|.$

\noindent In the subsequent Lemmas we use the following trace inequality.
\begin{lem}\label{22traceemb}
Let $2\leq r\leq 2^*_\alpha $, then there exists $C_r>0$ such that for all $v\in H^1_{0, L}(\mathcal C)$,
\begin{equation*}
\left(\displaystyle \int_{\mathcal C}y^{1-2\alpha}|\nabla v|^2 dzdy\right)^\frac12\geq C_r\left(\int_{\Omega\times \{0\}}|v(z, 0)|^r dx\right)^{\frac{1}{r}}.
\end{equation*}
Moreover, for $r=2^*_\alpha$, the best constant in Lemma \ref{22traceemb} will be denoted by $S(\alpha, n)$ and  it is indeed achieved in the case $\Omega=\mathbb{R}^{N+1}_+$ when
$u=\mathrm{trace}\;v=v(\cdot,0)$ takes the form
\begin{equation}\label{extrml}
u(x)=u_{\varepsilon}(x)= \frac{\varepsilon^{(N-2\alpha)\slash
2}}{(|x|^2+\varepsilon^2)^{(N-2\alpha)\slash
    2}}
\end{equation}
with $\varepsilon>0$ arbitrary.

\end{lem}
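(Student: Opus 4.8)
The plan is to read off both assertions from ingredients that are already in place in the extension framework, rather than to build the inequality from scratch. For the inequality itself I would simply compose the trace inequality $\|v(\cdot,0)\|_{H^\alpha_0(\Omega)}\le\|v\|$ recorded just above the lemma with the classical fractional Sobolev embedding of $H^\alpha_0(\Omega)$ into the Lebesgue scale; for the statement about the sharp constant at $r=2^*_\alpha$ I would transfer the extremal problem to the half-space and invoke the known classification of minimizers.

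First I would record the continuous embedding $H^\alpha_0(\Omega)\hookrightarrow L^r(\Omega)$ for every $2\le r\le 2^*_\alpha$. Since $\Omega$ is bounded, the subcritical range $2\le r<2^*_\alpha$ gives compact embeddings, while at the endpoint $r=2^*_\alpha$ one has the (non-compact) critical fractional Sobolev inequality; in all cases there is a constant $C>0$ with $\|u\|_{L^r(\Omega)}\le C\|u\|_{H^\alpha_0(\Omega)}$ for every $u\in H^\alpha_0(\Omega)$. Then, given $v\in H^1_{0,L}(\mathcal C)$, its trace $u:=v(\cdot,0)$ lies in $H^\alpha_0(\Omega)$, and chaining the trace inequality with the embedding yields
\[
\left(\int_{\Omega\times\{0\}}|v(z,0)|^r\,dx\right)^{1/r}=\|u\|_{L^r(\Omega)}\le C\,\|u\|_{H^\alpha_0(\Omega)}\le C\,\|v\|=C\left(\kappa_\alpha\int_{\mathcal C}y^{1-2\alpha}|\nabla v|^2\,dzdy\right)^{1/2}.
\]
Rearranging and absorbing the normalization constant $\kappa_\alpha$ into the constant produces exactly the claimed inequality, with $C_r$ depending only on $C$ and $\kappa_\alpha$.

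For the final assertion I would work on $\mathcal C=\mathbb{R}^{n+1}_+$, where the Rayleigh quotient defining $S(\alpha,n)$ is invariant under translations in the $x$-variable and under the dilation $v(z,y)\mapsto \lambda^{(n-2\alpha)/2}v(\lambda z,\lambda y)$ (a direct computation shows this leaves both the weighted Dirichlet energy and the $L^{2^*_\alpha}$ norm of the trace unchanged). The content of the statement is that despite the loss of compactness at the critical exponent, the infimum is attained, and that every minimizer is, up to translation and dilation, the function $u_\varepsilon$ displayed in \eqref{extrml}: one reduces to radially symmetric profiles by Schwarz symmetrization, and the classification of positive solutions of the associated critical extension equation forces this explicit form.

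The genuinely hard analytic input is precisely this last step — existence and uniqueness of the extremal and the evaluation of the sharp constant $S(\alpha,n)$ — which rests on moving-plane and rearrangement arguments for the fractional Yamabe-type equation on $\mathbb{R}^n$ and is entirely independent of the Kirchhoff structure of \eqref{Q}. I would therefore not reprove it here but cite it from the extension literature, e.g.\ \cite{MR2354493, MR2911424, MR3117361}, and use the explicit form \eqref{extrml} of the extremals as the starting point for the concentration-compactness estimates needed later in the paper.
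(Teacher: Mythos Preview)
Your approach is correct, and in fact it supplies more than the paper does: the paper states this lemma without proof, treating it as a known ingredient from the extension literature (essentially \cite{MR2354493, MR2911424}). Your sketch---composing the trace inequality with the fractional Sobolev embedding, then citing the classification of extremals on the half-space---is exactly the right way to justify it, and aligns with how the result is used downstream.
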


\noi As discussed above, the problem \eqref{Q} is equivalent to the study of the following extension problem
  \begin{equation}\label{E}
\left\{\begin{array}{rl}
 -\mathrm{div} (y^{1-2\alpha}\nabla w)&=0, \quad\textrm{in}\; \mathcal{C},\\
 w &=0 \;\;\;\; \text{on}\; \partial_L,\\
  M(\|w\|^2)\frac{\partial w}{\partial \nu}&= \lambda f(z)|w|^{q-2}w+|w|^{2^*_\alpha -2}w\;\; \textrm{on}\;\; \Omega\times \{0\},
\end{array}
\right.
\end{equation}
where $\frac{\partial w}{\partial \nu}=-\kappa_\alpha\displaystyle\lim_{y\rightarrow 0^+}y^{1-2\alpha}\frac{\partial w}{\partial y}(z, y)$.

\noindent The functional $\mathcal I_{M, \lambda}:H^1_{0, L}(\mathcal C)\rightarrow \mathbb R$ associated to the problem $\eqref{E}$ is defined as
\begin{equation}\label{fel}
\mathcal I_{M, \lambda}(w)=\frac{1}{2}\widehat M(\|w\|^2) -\frac{\lambda}{q}\int_{\Omega\times \{0\}} f(z)|w(z, 0)|^q dz-\frac{1}{2^*_\alpha}\int_{\Omega\times\{0\}} |w(z,0)|^{2^*_\alpha}dz.
\end{equation}
Any function $w\in H^1_{0, L}(\mathcal C)$ is called the weak solution of the problem $\eqref{E}$ if for all $\phi \in H^1_{0, L}(\mathcal C)$
\begin{align*}
M(\|w\|^2)\kappa_\alpha \int_\mathcal C y^{1-2\alpha}\nabla w.\nabla \phi dzdy&=\lambda \int_{\Omega\times\{0\}} f(z)|w(z, 0)|^{q-2}w(z, 0)\phi(z, 0) dz\\&+\int_{\Omega\times \{0\}}|w(z,0)|^{2^*_\alpha-2}w(z, 0)\phi(z, 0) dz.
\end{align*}
It is clear that critical points of $\mathcal I_{M, \lambda}$ in $H^1_{0, L}(\mathcal C)$ corresponds to the critical points of $\mc J_{M,\lambda}$ in $H^\alpha_0(\Omega)$. Thus if $w$ solves the problem \eqref{E}, then $u=\mathrm{trace}(w)=w(z,0)$ is the solution of the problem \eqref{Q} and vice-versa.
Therefore we look for the solutions  $w$ of extended problem $\eqref{E}$ to get the solutions of the problem $\eqref{Q}$. \\

\section{Nehari manifold and fibering maps}
\noi Now we consider the Nehari manifold associated to the problem $\eqref{E}$ as
\begin{equation*}
\mc N_\lambda=\{w\in H^1_{0, L}(\mathcal C)\setminus\{0\}\; : \; \langle \mc I_{M,\lambda}^{\prime}(w),w\rangle=0\}.
\end{equation*}
Thus $w\in \mc N_\lambda$ if and only if
\begin{equation}\label{22nlam}
M(\|w\|^2)\|w\|^2-\lambda \int_{\Omega\times \{0\}}f(z)|w(z, 0)|^q dz-\int_{\Omega\times \{0\}}|w(z,0)|^{2^*_\alpha}dz=0.
\end{equation}
Now  for a fixed $w\in H^1_{0, L}(\mathcal C)$ we define the fiber map $\Phi_w: \mathbb{R}^+\rightarrow \mathbb{R}$ as $\Phi_w(t)=\mc I_{M,\lambda}(tw)$. Thus $tw\in \mc N_\lambda$ if and only if
\begin{equation*}
\Phi_w^{\prime}(t)=tM(t^2\|w\|^2)\|w\|^2 -\lambda t^{q-1}\int_{\Omega\times \{0\}}f(z)|w(z, 0)|^q dz-t^{2^*_\alpha-1}\int_{\Omega\times \{0\}}|w(z,0)|^{2^*_\alpha} dz=0.
\end{equation*}

Also
\begin{equation}\label{22phiseconde}
\Phi^{\prime\prime}_w(1)=a\|w\|^2+3\varepsilon\|w\|^4-(q-1)\lambda\int_{\Omega\times \{0\}}f(z)|w(z, 0)|^q dz-{(2^*_\alpha-1)}\int_{\Omega\times \{0\}}|w(z, 0)|^{2^*_\alpha} dz.
\end{equation}
We split $\mc N_\lambda$ into three parts as
\begin{equation*}\label{nlaz}
\mc N_\lambda^{+}=\{w\in \mc N_\lambda\;|\;\Phi^{\prime\prime}_w(1)> 0\}, \;\;\mc N_\lambda^{-}=\{w\in \mc N_\lambda\;|\;\Phi^{\prime\prime}_w(1)< 0\} \; \text{and}\; \mc N_\lambda^0=\{w\in \mc N_\lambda\;|\;\Phi^{\prime\prime}_w(1)=0\}.\\
\end{equation*}
In general the set $\mc N_\lambda$ is not a manifold but following Lemma shows that it is indeed a $ C^1-$ manifold.
\begin{lem}\label{l22.2}
There exists $\lambda_{1} > 0$ such that $\mathcal{N}_{\lambda}^{0} = \emptyset, \;\textrm{for all} \;\lambda \in (0, \lambda_{1})$.
\end{lem}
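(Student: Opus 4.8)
The plan is to argue by contradiction and squeeze $\|w\|$ between two bounds that become incompatible as $\lambda\to 0^+$. Suppose, for some $\lambda$ to be constrained later, there is $w\in\mc N_\lambda^0$. Abbreviate $A=\|w\|^2$, $B=\int_{\Omega\times\{0\}}f(z)|w(z,0)|^q\,dz$ and $C=\int_{\Omega\times\{0\}}|w(z,0)|^{2^*_\alpha}\,dz$. Since $M(t)=a+\varepsilon t$, membership $w\in\mc N_\lambda$ in \eqref{22nlam} reads $aA+\varepsilon A^2-\lambda B-C=0$, while the condition $\Phi''_w(1)=0$ from \eqref{22phiseconde} reads $aA+3\varepsilon A^2-(q-1)\lambda B-(2^*_\alpha-1)C=0$. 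Everything will follow from suitable linear combinations of these two identities.

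First I would eliminate $\lambda B$ between the two identities, which gives
\[
(2^*_\alpha-q)\,C=a(2-q)A+\varepsilon(4-q)A^2 .
\]
Because $1<q<2$, all coefficients on the right are positive, so $C\ge \frac{a(2-q)}{2^*_\alpha-q}A$. Combining this with the Sobolev trace inequality of Lemma \ref{22traceemb} for $r=2^*_\alpha$, which yields a constant $c_1>0$ with $C\le c_1 A^{2^*_\alpha/2}$, and using $2^*_\alpha>2$, I obtain a lower bound $A\ge A_0$ for a constant $A_0=A_0(a,q,\alpha,n)>0$ that is \emph{independent of $\lambda$} (and of $\varepsilon$, since the $\varepsilon$-term only strengthens the inequality).

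Next I would eliminate $C$ instead, obtaining
\[
(2^*_\alpha-q)\,\lambda B=a(2^*_\alpha-2)A+\varepsilon(2^*_\alpha-4)A^2 .
\]
Here the hypothesis $n<4\alpha$ is essential: it is exactly equivalent to $2^*_\alpha>4$, so that $2^*_\alpha-4>0$ and the right-hand side is positive; hence $\lambda B\ge \frac{a(2^*_\alpha-2)}{2^*_\alpha-q}A>0$ (in particular this already forces $B>0$, so the sign-changing nature of $f$ causes no trouble). On the other hand, estimating $B\le\int_{\Omega\times\{0\}}|f|\,|w(z,0)|^q\,dz$ by H\"older with exponents $2^*_\alpha/q$ and $2^*_\alpha/(2^*_\alpha-q)$ together with the same trace inequality gives $B\le c_2 A^{q/2}$, where $c_2$ depends only on $\|f\|_{L^{2^*_\alpha/(2^*_\alpha-q)}}$ and the Sobolev constant. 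Combining the two estimates yields $\frac{a(2^*_\alpha-2)}{2^*_\alpha-q}A\le \lambda c_2 A^{q/2}$, and since $q/2<1$ this forces an upper bound $A\le A_1(\lambda)$ with $A_1(\lambda)=\big(\lambda c_2(2^*_\alpha-q)/(a(2^*_\alpha-2))\big)^{2/(2-q)}\to 0$ as $\lambda\to 0^+$.

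Finally, I would choose $\lambda_1>0$ small enough that $A_1(\lambda)<A_0$ for all $\lambda\in(0,\lambda_1)$; then the two bounds $A_0\le A\le A_1(\lambda)$ are contradictory, so no such $w$ exists and $\mc N_\lambda^0=\emptyset$ on $(0,\lambda_1)$. The only genuinely delicate point is the bookkeeping of the coefficient signs in the two eliminations, and these all come out favorably precisely because $2<2^*_\alpha$ (from $n>2\alpha$) and $2^*_\alpha>4$ (from $n<4\alpha$); the latter is what allows the $\varepsilon A^2$ term in the $\lambda B$ identity to be discarded without spoiling the positivity, and it is exactly where the dimensional restriction $2\alpha<n<4\alpha$ enters.
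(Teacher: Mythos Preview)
Your proof is correct and follows essentially the same route as the paper's: both arguments derive the two eliminated identities (the paper's (2.11) and (2.12)), use the first together with the Sobolev trace inequality to get a $\lambda$-free lower bound on $\|w\|$, and use the second together with H\"older and the trace inequality to force a contradiction for small $\lambda$. The only cosmetic differences are that the paper separates out the case $\int f|w|^q=0$ (which you absorb by observing that the second identity forces $B>0$) and packages the final contradiction via an auxiliary function $E_\lambda(w)$ rather than as competing bounds $A_0\le A\le A_1(\lambda)$; the substance is identical.
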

\begin{proof}
We have following two cases.\\
\noi \textbf{Case 1:}  $w \in \mathcal{N}_{\lambda}$ and $ \ds \int_{\Omega\times \{0\}}f(z) |w(z,0)|^{q}dz = 0.$\\
  From \eqref{22nlam} , we have,\; $a \|w\|^{2} + \varepsilon\|w\|^{4}- \ds \int_{\Omega\times\{0\}} |w(z,0)|^{{2^*_\alpha}} dz = 0$. Now,
  \begin{eqnarray*}
    a \|w\|^{2} + 3\varepsilon\|w\|^{4}- (2^*_\alpha-1)\int_{\Omega\times\{0\}}|w|^{{2^*_\alpha}} dz  &=& (2-{2^*_\alpha})a \|w\|^{2} + (4-{2^*_\alpha})\varepsilon\|w\|^{4}<0
  \end{eqnarray*}
which implies $w \notin \mathcal{N}_{\lambda}^{0}$.\\
  \noi \textbf{Case 2:} $w \in \mathcal{N}_{\lambda}$ and $\ds \int_{\Omega\times\{0\}}f(z) |w(z,0)|^{q}dz \neq 0.$\\
  Suppose $w \in \mathcal{N}_{\lambda}^{0}$ . Then from \eqref{22nlam} and \eqref{22phiseconde}, we have
    \begin{eqnarray}\label{2.11}
    (2-q)a \|w\|^{2} + (4-q)\epsilon\|w\|^{4} &=& ({2^*_\alpha}-q)\int_{\Omega\times\{0\}}|w(z,0)|^{2^*_\alpha} dz,\\\label{2.12}
    ({2^*_\alpha}-2) a \|w\|^{2}+({2^*_\alpha}-4)\varepsilon\|w\|^4 &=& ({2^*_\alpha}-q) \lambda \int_{\Omega\times\{0\}} f(z)|w(z,0)|^q dz.
  \end{eqnarray}
  Define $E_{\lambda}: \mc N_{\lambda}\rightarrow \mathbb R$ as
  \begin{equation*}
    E_{\lambda}(w) = \frac{({2^*_\alpha}-2) a \|w\|^2+({2^*_\alpha}-4)\varepsilon \|w\|^4}{{2^*_\alpha}-q} - \lambda\int_{\Omega\times\{0\}} f(z)|w(z,0)|^{q} dz,
  \end{equation*}
 { then from \eqref{2.12}, $E_{\lambda}(w) = 0, \; \textrm{for all}\; w \in \mathcal{N}_{\lambda}^{0}.$  Also,}
  \begin{eqnarray*}
  E_{\lambda}(w)& \geq & \frac{({2^*_\alpha}-2)a\|w\|^2}{{2^*_\alpha}-q} - \lambda {\|f\|_{{\frac{{{2^*_\alpha}}}{{{2^*_\alpha}}-q}}}}\|w\|^q(\sqrt{\kappa_\alpha} S(\alpha, n))^{-q},\\
  & \geq & \|w\|^q\left(\left(\frac{{2^*_\alpha}-2}{{2^*_\alpha}-q}\right)a\|w\|^{(2-q)} - \lambda {\|f\|_{{\frac{{2^*_\alpha}}{{2^*_\alpha}-q}}}}(\sqrt{\kappa_\alpha} S(\alpha, n))^{-q}\right).\\
  \end{eqnarray*}

  Now, from \eqref{2.11}, we get
  \begin{equation}\label{2.13}
  \|w\| \geq \left(\left(\frac{2-q}{{2^*_\alpha}-q}\right){a (\sqrt{\kappa_\alpha} S(\alpha, n))^{{2^*_\alpha}}}\right)^{\frac{1}{{2^*_\alpha}-2}}.
  \end{equation}
  From \eqref{2.13}, there exists $\lambda_1>0$ such that for $\lambda\in (0, \lambda_1),\;E_{\lambda}(w)>0, \; \forall\; w \in \mathcal{N}_{\lambda}^{0},$ which is contradiction.
  \end{proof}

  \noi The following lemma shows that minimizers for $\mc I_{M, \lambda}$ on any
subset of $\mc N_{\la}$ are usually critical points for $\mc I_{M, \lambda}$.
\begin{lem}
Let $w$ be a local minimizer for $\mc I_{M, \lambda}$ in any of the subsets of $\mc N_{\la}$ such that
$w\notin \mc N_{\la}^{0}$, then $w$ is a critical point
for $\mc I_{M, \lambda}$.
\end{lem}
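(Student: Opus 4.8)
The plan is to run the standard Lagrange-multiplier argument attached to the Nehari constraint, with the key observation that the second derivative of the fiber map controls the multiplier.

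First I would introduce the constraint functional $\psi:H^1_{0, L}(\mc C)\to\mb R$ by
\[
\psi(w)=\ld \mc I_{M, \lambda}'(w),w\rd=M(\|w\|^2)\|w\|^2-\lambda\int_{\Omega\times\{0\}}f(z)|w(z,0)|^q dz-\int_{\Omega\times\{0\}}|w(z,0)|^{2^*_\alpha}dz,
\]
so that, by \eqref{22nlam}, $\mc N_\la=\{w\neq 0: \psi(w)=0\}$. Since $M(t)=a+\varepsilon t$ and $1<q<2<2^*_\alpha$, both $\mc I_{M, \lambda}$ and $\psi$ are of class $C^1$, so $\mc N_\la$ (away from $\mc N_\la^0$) is a $C^1$-manifold and the method of Lagrange multipliers is available.

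Since $w$ is a local minimizer of $\mc I_{M, \lambda}$ on a subset of $\mc N_\la$, it is in particular a constrained local extremum for the constraint $\psi=0$, and hence there exists $\mu\in\mb R$ with
\[
\mc I_{M, \lambda}'(w)=\mu\,\psi'(w).
\]
To pin down $\mu$, I would pair this identity with $w$. The left-hand side becomes $\ld \mc I_{M, \lambda}'(w),w\rd=\psi(w)=0$ because $w\in\mc N_\la$. For the right-hand side I would show $\ld \psi'(w),w\rd=\Phi_w''(1)$: from $\Phi_w(t)=\mc I_{M, \lambda}(tw)$ one has $\psi(tw)=t\,\Phi_w'(t)$, and differentiating in $t$ at $t=1$ together with $\Phi_w'(1)=0$ (valid since $w\in\mc N_\la$) yields exactly $\ld \psi'(w),w\rd=\Phi_w''(1)$, which matches \eqref{22phiseconde}. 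Thus $\mu\,\Phi_w''(1)=0$.

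The conclusion then follows from the hypothesis $w\notin\mc N_\la^0$, which by definition means $\Phi_w''(1)\neq 0$; hence $\mu=0$ and $\mc I_{M, \lambda}'(w)=0$, i.e.\ $w$ is a critical point of $\mc I_{M, \lambda}$ in $H^1_{0, L}(\mc C)$. The one point that needs care — and which I regard as the main obstacle — is the legitimacy of invoking the Lagrange multiplier theorem, namely verifying that $\psi'(w)\neq 0$ so that $\{\psi=0\}$ is a genuine manifold near $w$. This is self-consistent: the same computation gives $\ld \psi'(w),w\rd=\Phi_w''(1)\neq 0$ for $w\notin\mc N_\la^0$, so $\psi'(w)\neq 0$ automatically, and the argument closes.
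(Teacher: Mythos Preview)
Your proof is correct and follows essentially the same Lagrange-multiplier argument as the paper: introduce the constraint functional $\psi(w)=\ld \mc I_{M,\lambda}'(w),w\rd$, obtain $\mc I_{M,\lambda}'(w)=\mu\,\psi'(w)$, test against $w$ to get $\mu\,\Phi_w''(1)=0$, and conclude $\mu=0$ from $w\notin\mc N_\la^0$. You are a bit more careful than the paper in explicitly checking that $\psi'(w)\neq 0$ (so that the Lagrange multiplier theorem applies), but otherwise the route is identical.
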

\begin{proof}
Let $w$ be a local minimizer for $\mc I_{M, \lambda}$ in any of the
subsets of $\mc N_{\la}$. Then, in any case $w$ is a
minimizer for $\mc I_{M, \lambda}$ under the constraint $\mc J_{M, \la}(w):=\ld
\mc I_{M, \lambda}^{\prime}(w),w\rd =0$. Hence, by the theory of Lagrange
multipliers, there exists $\mu \in \mb R$ such that $ \mc I_{M, \lambda}^{\prime}(w)= \mu \mc J_{M, \la}^{\prime}(w)$. Thus $\ld
\mc I_{M, \lambda}^{\prime}(w),w\rd= \mu\;\ld \mc J_{M, \lambda}^{\prime}(w),w\rd = \mu
\Phi_{w}^{\prime\prime}(1)$=0, but $w\notin \mc N_{\la}^{0}$ and consequently
$\Phi_{w}^{\prime\prime}(1) \ne 0$. Hence $\mu=0$ which completes the
proof of the Lemma.
\end{proof}

\begin{lem}\label{cbb}
$\mathcal I_{M,\lambda}$ is coercive and bounded below on  $\mathcal{N}_{\lambda}$. Moreover, there exists a
constant $ C >0$ such that $\mathcal I_{M,\lambda} > - C \lambda^{2/(2-q)}.$
\end{lem}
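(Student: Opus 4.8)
The plan is to use the Nehari constraint to rewrite $\mc I_{M,\lambda}$ on $\mc N_\lambda$ as a sum of two \emph{positive} powers of $\|w\|$ minus a single sublinear term, and then to reduce the estimate to a one–variable minimization. Since $M(t)=a+\varepsilon t$ we have $\widehat M(t)=at+\tfrac{\varepsilon}{2}t^2$, so for every $w\in H^1_{0, L}(\mc C)$,
\begin{equation*}
\mc I_{M,\lambda}(w)=\frac{a}{2}\|w\|^2+\frac{\varepsilon}{4}\|w\|^4-\frac{\lambda}{q}\int_{\Omega\times\{0\}}f(z)|w(z,0)|^q\,dz-\frac{1}{2^*_\alpha}\int_{\Omega\times\{0\}}|w(z,0)|^{2^*_\alpha}\,dz.
\end{equation*}
First I would invoke the membership relation \eqref{22nlam} in the form $\int|w(z,0)|^{2^*_\alpha}=a\|w\|^2+\varepsilon\|w\|^4-\lambda\int f|w|^q$ to eliminate the critical integral. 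Substituting, I obtain for all $w\in\mc N_\lambda$
\begin{equation*}
\mc I_{M,\lambda}(w)=a\Big(\tfrac12-\tfrac{1}{2^*_\alpha}\Big)\|w\|^2+\varepsilon\Big(\tfrac14-\tfrac{1}{2^*_\alpha}\Big)\|w\|^4-\lambda\Big(\tfrac1q-\tfrac{1}{2^*_\alpha}\Big)\int_{\Omega\times\{0\}}f(z)|w(z,0)|^q\,dz.
\end{equation*}

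The decisive structural point is that both leading coefficients are strictly positive. Indeed $2^*_\alpha>2$ always, while the hypothesis $n<4\alpha$ is \emph{exactly} equivalent to $2^*_\alpha=2n/(n-2\alpha)>4$, so that $\tfrac14-\tfrac{1}{2^*_\alpha}>0$ as well. Next I would control the sublinear term: by H\"older's inequality with conjugate exponents $2^*_\alpha/(2^*_\alpha-q)$ and $2^*_\alpha/q$, followed by the trace embedding of Lemma \ref{22traceemb} at the critical exponent,
\begin{equation*}
\int_{\Omega\times\{0\}}f(z)|w(z,0)|^q\,dz\le \|f\|_{\frac{2^*_\alpha}{2^*_\alpha-q}}\big(\sqrt{\kappa_\alpha}\,S(\alpha,n)\big)^{-q}\|w\|^q.
\end{equation*}
Since $1<q<2$, the positive terms of order $\|w\|^2$ and $\|w\|^4$ dominate the negative term of order $\|w\|^q$ as $\|w\|\to\infty$, which yields simultaneously that $\mc I_{M,\lambda}$ is coercive and bounded below on $\mc N_\lambda$.

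For the explicit bound I would discard the nonnegative quartic term and estimate $\mc I_{M,\lambda}(w)\ge g(\|w\|)$, where $g(s)=A s^2-B\lambda s^q$ with $A=a(\tfrac12-\tfrac1{2^*_\alpha})>0$ and $B=(\tfrac1q-\tfrac1{2^*_\alpha})\|f\|_{\frac{2^*_\alpha}{2^*_\alpha-q}}(\sqrt{\kappa_\alpha}S(\alpha,n))^{-q}>0$. Minimizing $g$ over $s>0$, the unique critical point is $s_*=(qB\lambda/2A)^{1/(2-q)}$, and a direct computation gives
\begin{equation*}
\min_{s>0}g(s)=g(s_*)=-\frac{2-q}{2}\Big(\frac{q}{2A}\Big)^{\frac{q}{2-q}}B^{\frac{2}{2-q}}\,\lambda^{\frac{2}{2-q}}=:-C\lambda^{\frac{2}{2-q}},
\end{equation*}
the exponent arising because the powers of $\lambda$ combine as $\lambda^{q/(2-q)}\cdot\lambda=\lambda^{2/(2-q)}$. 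As the discarded quartic term is strictly positive on $\mc N_\lambda\subset H^1_{0, L}(\mc C)\setminus\{0\}$, the inequality is strict, giving $\mc I_{M,\lambda}(w)>-C\lambda^{2/(2-q)}$ with $C>0$ independent of $w$. The only step needing genuine care is verifying the sign of the quartic coefficient, which is precisely where the dimensional restriction $2\alpha<n<4\alpha$ is used; the remainder is a routine one–variable minimization.
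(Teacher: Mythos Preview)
Your proof is correct and follows essentially the same route as the paper: you use the Nehari constraint to eliminate the critical integral, drop the nonnegative quartic term (using $n<4\alpha\Leftrightarrow 2^*_\alpha>4$), bound the sublinear term via H\"older and the trace inequality, and reduce to minimizing $As^2-B\lambda s^q$. The only cosmetic difference is the change of variable---the paper sets $t=\|w\|^q$ and minimizes $At^{2/q}-B\lambda t$, whereas you work directly with $s=\|w\|$---but the content and the resulting bound $-C\lambda^{2/(2-q)}$ are identical.
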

\begin{proof}
For $w\in \mathcal{N}_{\lambda},$ we have
\begin{align*}
\mathcal I_{M,\lambda}(w) &= \left(\frac12-\frac{1}{2^*_\alpha}\right)a\|w\|^{2}+\left(\frac14-\frac{1}{{2^*_\alpha}}\right)\varepsilon \|w\|^4 - \lambda \left(\frac{1}{q}-\frac{1}{{2^*_\alpha}}\right)\int_{\Omega\times\{0\}} f(z)|w(z,0)|^{q}dz,\\
&\geq \left(\frac12-\frac{1}{{2^*_\alpha}}\right)a\|w\|^{2} - \lambda \left(\frac{1}{q}-\frac{1}{{2^*_\alpha}}\right)(\sqrt{\kappa_\alpha}S(\alpha, n))^{-q}\|f\|_{\frac{{2^*_\alpha}}{{2^*_\alpha}-q}}\|w\|^q.
\end{align*}
Define
$$
g(t) =\left(\frac12-\frac{1}{{2^*_\alpha}}\right)at^{\frac{2}{q}} - \lambda \left(\frac{1}{q}-\frac{1}{2^*_\alpha}\right)(\sqrt{\kappa_\alpha} S(\alpha, n))^{-q}\|f\|_{\frac{2^*_\alpha}{2^*_\alpha-q}}t,
$$
then $g(t)$ attains its minimum at
$$t=\left(\frac{\lambda(2^*_\alpha-q)\|f\|_{\frac{2^*_\alpha}{2^*_\alpha-q}}(\sqrt{\kappa_\alpha} S(\alpha, n))^{-q}}{(2^*_\alpha-2)a}\right)^\frac{q}{2-q}.
$$
Hence $\mc I_{M,\lambda}(w)\geq-C\lambda^\frac{2}{2-q}$
for some constant $C>0$.
\end{proof}
\noi Define
$$
H^+=\left\{w\in H^1_{0,L}(\mathcal C): \ds \int_{\Omega\times\{0\}} f(z)|w(z,0)|^qdz>0\right\}
$$
and
$$
H^-=\left\{w\in H^1_{0,L}(\mathcal C): \ds \int_{\Omega\times\{0\}} f(z)|w(z,0)|^qdz<0\right\}.
$$
Then we have the following lemma
\begin{lem}\label{L37}
(i) For every $w \in H^+$, there exists $\lambda_2>0$, unique $t_{\max}=t_{\max}(w)>0$
and unique $t^+(w)<t_{\max}<t^-(w)$ such that $t^+w\in \mathcal{N}_\lambda^+, t^-w\in \mathcal{N}_\lambda$  for $\lambda \in (0, \lambda_2)$ and
$\mathcal I_{M,\lambda}(t^+w)=\displaystyle\min_{0\leq t\leq t^-} \mathcal I_{M,\lambda}(tw)$,
$\mathcal I_{M,\lambda}(t^-w) = \displaystyle \max_{t\geq t_{\max}} \mathcal I_{M,\lambda}(tw)$.\\
(ii) For $w \in H^-$, there exists a unique $t^*>0$ such that $t^*w\in \mathcal{N}_\lambda^-$.
\end{lem}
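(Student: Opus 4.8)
The plan is to study, for fixed $w$, the fibering map $\Phi_w(t)=\mc I_{M,\lambda}(tw)$ through the auxiliary function
\[
m_w(t)=at^{2-q}\|w\|^{2}+\varepsilon t^{4-q}\|w\|^{4}-t^{2^*_\alpha-q}\int_{\Omega\times\{0\}}|w(z,0)|^{2^*_\alpha}dz,
\]
obtained by extracting the common factor $t^{q-1}$ from $\Phi_w'(t)$, so that
\[
\Phi_w'(t)=t^{q-1}\Big(m_w(t)-\lambda\int_{\Omega\times\{0\}}f(z)|w(z,0)|^q dz\Big).
\]
Thus $tw\in\mc N_\lambda$ exactly when $m_w(t)=\lambda\int f|w|^q$. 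Throughout I would use the exponent ordering $1<q<2<4<2^*_\alpha$, which follows from $2\alpha<n<4\alpha$ (indeed $2^*_\alpha=4$ at $n=4\alpha$ and $2^*_\alpha$ decreases in $n$), so in particular $2^*_\alpha-2>2$.

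First I would show $m_w$ has a unique critical point $t_{\max}=t_{\max}(w)>0$ at which it attains its global maximum. Since $m_w(0)=0$ and $m_w(t)\to-\infty$ as $t\to\infty$ (the term $-t^{2^*_\alpha-q}\int|w|^{2^*_\alpha}$ dominates), it suffices to locate the zeros of $m_w'(t)=t^{1-q}P(t)$, where $P(t)=a(2-q)\|w\|^2+\varepsilon(4-q)t^2\|w\|^4-(2^*_\alpha-q)t^{2^*_\alpha-2}\int|w|^{2^*_\alpha}$. One checks $P(0)>0$, $P(\infty)=-\infty$, and that $P$ is first increasing then decreasing (its derivative changes sign exactly once, because $2^*_\alpha-4>0$); hence $P$ has a single positive root $t_{\max}$, with $m_w$ increasing on $(0,t_{\max})$ and decreasing afterwards. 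Differentiating the factorization and using $\Phi_w'(t)=0$ at Nehari points yields the identity $\Phi_w''(t)=t^{q-1}m_w'(t)$, so a point $tw\in\mc N_\lambda$ lies in $\mc N_\lambda^+$ when $t<t_{\max}$ and in $\mc N_\lambda^-$ when $t>t_{\max}$.

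For (i), with $w\in H^+$ the right-hand side $\lambda\int f|w|^q>0$, so $m_w(t)=\lambda\int f|w|^q$ has exactly two roots $t^+<t_{\max}<t^-$ precisely when $\lambda\int f|w|^q<m_w(t_{\max})$. The heart of the argument — and the step I expect to be the main obstacle — is producing a threshold $\lambda_2$ independent of $w$. For this I would discard the nonnegative $\varepsilon$-term to bound $m_w(t_{\max})$ below by the maximum of $at^{2-q}\|w\|^2-t^{2^*_\alpha-q}\int|w|^{2^*_\alpha}$, compute that maximum explicitly, and insert the trace bound $\int|w|^{2^*_\alpha}\le(\sqrt{\kappa_\alpha}S(\alpha,n))^{-2^*_\alpha}\|w\|^{2^*_\alpha}$; the homogeneities conspire so that $m_w(t_{\max})\ge c_1\|w\|^{q}$. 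On the other hand $\lambda\int f|w|^q\le \lambda(\sqrt{\kappa_\alpha}S(\alpha,n))^{-q}\|f\|_{2^*_\alpha/(2^*_\alpha-q)}\|w\|^{q}$, and comparing the two $\|w\|^q$-homogeneous estimates yields a $w$-independent $\lambda_2$ below which $t^\pm$ exist; their location relative to $t_{\max}$ places $t^+w\in\mc N_\lambda^+$ and $t^-w\in\mc N_\lambda^-$. Finally, reading off the sign of $\Phi_w'(t)=t^{q-1}(m_w(t)-\lambda\int f|w|^q)$ shows $\Phi_w$ is decreasing on $(0,t^+)$, increasing on $(t^+,t^-)$ and decreasing on $(t^-,\infty)$; since $t^+<t_{\max}<t^-$ this gives $\mc I_{M,\lambda}(t^+w)=\min_{0\le t\le t^-}\mc I_{M,\lambda}(tw)$ and $\mc I_{M,\lambda}(t^-w)=\max_{t\ge t_{\max}}\mc I_{M,\lambda}(tw)$.

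For (ii), with $w\in H^-$ the constant $\lambda\int f|w|^q<0$ lies strictly below the positive maximum $m_w(t_{\max})$ for every $\lambda>0$, so no smallness of $\lambda$ is needed. Since $m_w$ is positive on $(0,t_{\max}]$ and decreases strictly from $m_w(t_{\max})>0$ to $-\infty$ on $[t_{\max},\infty)$, the equation $m_w(t)=\lambda\int f|w|^q$ has a single root $t^*>t_{\max}$; because $t^*>t_{\max}$ we have $m_w'(t^*)<0$, whence $\Phi_w''(t^*)=t^{*\,q-1}m_w'(t^*)<0$ and $t^*w\in\mc N_\lambda^-$, which establishes uniqueness and membership simultaneously.
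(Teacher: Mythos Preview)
Your proposal is correct and follows essentially the same route as the paper: your auxiliary function $m_w$ coincides with the paper's $\psi_w$, and the key step---bounding $m_w(t_{\max})$ below by a constant times $\|w\|^q$ (by discarding the $\varepsilon$-term and using the trace/Sobolev inequality) and comparing with the H\"older bound $\lambda\int f|w|^q\le \lambda C\|w\|^q$ to obtain a $w$-independent $\lambda_2$---matches the paper's argument. Your treatment of part (ii) is in fact slightly more careful than the paper's, which notes only that $\psi_w(t)\to-\infty$ and asserts existence of $t^*$ without spelling out uniqueness.
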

\begin{proof}
Define $\psi_w:\mathbb{R}^+\rightarrow \mathbb{R}$ as
\begin{align}\nonumber
&\psi_w(t)=at^{2-q}\|w\|^2+\epsilon t^{4-q}\|w\|^{4}-t^{2^*_\alpha-q}\int_{\Omega\times\{0\}} |w(z,0)|^{2^*_\alpha} dz.\;\textrm{Then}\;\\
&\psi_w^{\prime}(t)=a(2-q)t^{1-q}\|w\|^2+\epsilon (4-q)t^{3-q}\|w\|^{4}-(2^*_\alpha-q)t^{2^*_\alpha-1-q}\int_{\Omega\times\{0\}} |w(z,0)|^{2^*_\alpha} dz.\label{siutd}
\end{align}
We also note that $\Phi_{tw}$ and $\psi_w$ satisfies $\Phi_{tw}^{\prime\prime}(1)=t^{-q-1}\psi_w^{\prime}(t).$
Let $w \in H^{+}.$ Then from \eqref{siutd}, we note that $\psi_{w}(t) \rightarrow - \infty$ as $t \rightarrow \infty$. From \eqref{siutd}, it is easy to see that
$\displaystyle\lim_{t\rightarrow 0^{+}}\psi'_{w}(t)>0$ and $\displaystyle\lim_{t\rightarrow \infty}\psi^{'}_{w}(t)<0$. Moreover, it can be shown that there exists
a unique $t_{\max} = t_{\max}(w)>0$ such that $\psi_{w}(t)$ is increasing on $(0, t_{\max})$, decreasing on $(t_{\max}, \infty)$ and
$\psi'_{w}(t_{\max}) = 0$, that is,
\begin{equation}\label{2secnew}
    a(2-q)t_{\max}^{2}\|w\|^{2} + \epsilon (4-q)t_{\max}^{4}\|w\|^{4}-({2^*_\alpha}-q)t^{2^*_\alpha}_{\max}\int_{\Omega\times\{0\}}{|w(z,0)|^{2^*_\alpha}}dz, = 0.
\end{equation}
which implies
\begin{equation}\label{eneq}
t_{\max} \geq \frac{1}{\|w\|} \left(\frac{a(2-q)(\sqrt{\kappa_\alpha} S(\alpha, n))^{2^*_\alpha}}{({2^*_\alpha}-q)}\right)^{\frac{1}{2^*_\alpha-2}}:=T_1.
\end{equation}
Using inequality {\eqref{eneq}}, we have
\begin{eqnarray*}
    \psi_{w}(t_{\max}) & \geq & \psi_w(T_1)\geq a T_1^{2-q}\|w\|^2-T_1^{{2^*_\alpha}-q}\int_{\Omega\times\{0\}} |w(z,0)|^{{2^*_\alpha}}dz, \\
    & \ge  & C\|w\|^q \left(\left(\frac{2-q}{2^*_\alpha-q}\right)^\frac{2-q}{2^*_\alpha-2}-\left(\frac{2-q}{2^*_\alpha-q}\right)^\frac{2^*_\alpha-q}{2^*_\alpha-2}\right)>0.
\end{eqnarray*}
Hence there exists a $\lambda_2>0$ such that if
$\lambda<\lambda_2$, there exists unique $t^+ = t^+(w) {<} t_{\max}$ and $t^- = t^-(w) > t_{\max},$ such that
$\psi_{w}(t^+) = \lambda \ds \int_{\Omega\times\{0\}}{f(z)|w(z,0)|^{q}}dz = \psi_{w}(t^-)$.
That is,  $t^+w, t^-w \in \mathcal{N}_{\lambda}.$ Also $\psi'_{w}(t^+) > 0$ and $\psi_{w}'(t^-) < 0$
implies $t^+w \in \mathcal{N}^{+}_{\lambda}$ and $t^-w \in \mathcal{N}^{-}_{\lambda}.$
 Since
 $$
 \Phi'_{w}(t) = t^{q}\left(\psi_{w}(t)- \lambda \ds \int_{\Omega\times\{0\}} f(z)|w(z,0)|^{q}dz\right).
 $$
  Then $\Phi'_{w}(t)<0$ for all $t \in [0, t^+)$
  and $\Phi'_{w}(t)>0$ for all $t \in (t^+, t^-)$. So $\mathcal I_{M,\lambda}(t^+w) = \displaystyle\min_{0 \leq t \leq t^-}\mathcal I_{M,\lambda} (tw).$
   Also $\Phi'_w(t) > 0$ for all $t \in [t^+, t^-),
\Phi'_w(t^-) = 0$ and $\Phi'_w(t) < 0$ for all $t \in (t^-, \infty)$ implies that $\mathcal I_{M,\lambda}(t^-w)
= \displaystyle\max_{t \geq t_{\max}} \mathcal I_{M,\lambda}(tw).$\\
(ii) Let $w\in H^-$. Then from \eqref{siutd},
 we note that $\psi_{w}(t) \rightarrow -\infty$ as $t \rightarrow \infty$.
  Hence for all $\lambda>0$ there exists $t^*>0$ such that $t^*w\in \mathcal{N}_\lambda^-$.
\end{proof}
\noi Define
$$\theta _\lambda=\inf\{\mc I_{M,\lambda}(w): w\in \mathcal N_\lambda\},\;\;\theta _\lambda^+=\inf\{\mc I_{M,\lambda}(w): w\in \mathcal N_\lambda^+\} \;\;\textrm{and}\;\; \theta _\lambda^-=\inf\{\mc I_{M,\lambda}(w): w\in \mathcal N_\lambda^-\}.
$$
Then we have the following Lemma.
\begin{lem}\label{3a}
There exists $\mathrm{C} > 0$ such that $\theta_{\lambda}^+ < -\left(\frac12-\frac{1}{2^*_\alpha}\right)\frac{(2-q)}{q} a \mathrm{C}.$
\end{lem}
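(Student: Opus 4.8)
The plan is to show that the energy $\mc I_{M,\lambda}$ lies strictly below a negative multiple of $\|w\|^2$ \emph{at every} point of $\mc N_\lambda^+$, and then to fix one such point to manufacture the constant. Concretely, I would first establish that for every $w\in\mc N_\lambda^+$,
\begin{equation*}
\mc I_{M,\lambda}(w) < -\left(\frac12-\frac{1}{2^*_\alpha}\right)\frac{2-q}{q}\,a\|w\|^2 .
\end{equation*}
Since $\mc N_\lambda^+\neq\emptyset$ for $\lambda$ small (by Lemma \ref{L37}(i), as $f$ is positive on a set of positive measure, so $H^+\neq\emptyset$), I would then fix any $w_0\in\mc N_\lambda^+$ and set $C:=\|w_0\|^2>0$. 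Because $\theta_\lambda^+\le\mc I_{M,\lambda}(w_0)$, evaluating the displayed bound at $w_0$ yields exactly $\theta_\lambda^+<-\left(\frac12-\frac{1}{2^*_\alpha}\right)\frac{2-q}{q}aC$, which is the assertion.

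For the pointwise bound I would proceed by substitution. Using the membership relation \eqref{22nlam} to write $\int_{\Omega\times\{0\}}|w(z,0)|^{2^*_\alpha}dz=a\|w\|^2+\varepsilon\|w\|^4-\lambda\int_{\Omega\times\{0\}}f(z)|w(z,0)|^qdz$ and inserting this into the energy $\mc I_{M,\lambda}$, one obtains
\begin{equation*}
\mc I_{M,\lambda}(w)=\left(\frac12-\frac{1}{2^*_\alpha}\right)a\|w\|^2+\left(\frac14-\frac{1}{2^*_\alpha}\right)\varepsilon\|w\|^4-\left(\frac1q-\frac{1}{2^*_\alpha}\right)\lambda\int_{\Omega\times\{0\}}f(z)|w(z,0)|^qdz .
\end{equation*}
Feeding the same substitution into the defining inequality $\Phi''_w(1)>0$ of $\mc N_\lambda^+$ from \eqref{22phiseconde} (this is just the strict form of \eqref{2.12}) produces the lower bound
\begin{equation*}
\lambda\int_{\Omega\times\{0\}}f(z)|w(z,0)|^qdz > \frac{(2^*_\alpha-2)a\|w\|^2+(2^*_\alpha-4)\varepsilon\|w\|^4}{2^*_\alpha-q}.
\end{equation*}
As the coefficient $-(\frac1q-\frac{1}{2^*_\alpha})$ of the sublinear term in the energy is negative, this lower bound converts into the desired upper bound for $\mc I_{M,\lambda}(w)$.

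The step I expect to demand the most care is the sign bookkeeping after this last substitution, which is exactly where the hypothesis $2\alpha<n<4\alpha$ (equivalently $2^*_\alpha>4$) is used. After simplifying with the identity $\bigl(\frac1q-\frac{1}{2^*_\alpha}\bigr)/(2^*_\alpha-q)=1/(q\,2^*_\alpha)$, the coefficient of $a\|w\|^2$ collapses precisely to $-\bigl(\frac12-\frac{1}{2^*_\alpha}\bigr)\frac{2-q}{q}$, while the coefficient of $\varepsilon\|w\|^4$ becomes $\frac{(2^*_\alpha-4)(q-4)}{4q\,2^*_\alpha}$, which is negative exactly because $2^*_\alpha>4$ and $q<4$. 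Discarding this nonpositive quartic term gives the claimed pointwise inequality, and the conclusion follows by fixing $w_0$ as above. The only input beyond this algebra is the nonemptiness of $\mc N_\lambda^+$ for small $\lambda$, already supplied by Lemma \ref{L37}(i); note that $C=\|w_0\|^2$ may depend on $\lambda$, which is all the statement requires.
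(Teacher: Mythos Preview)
Your proposal is correct and follows essentially the same route as the paper. The only cosmetic difference is the order of elimination: the paper uses \eqref{22nlam} to remove the sublinear integral $\lambda\int f|w|^q$ from $\mc I_{M,\lambda}(w)$ and then bounds $\int|w|^{2^*_\alpha}$ from above via the strict form of \eqref{2.11}, whereas you remove $\int|w|^{2^*_\alpha}$ first and then bound $\lambda\int f|w|^q$ from below via the strict form of \eqref{2.12}; the resulting coefficients, the use of $2^*_\alpha>4$ to drop the quartic term, and the choice $C=\|w\|^2$ for a fixed element of $\mc N_\lambda^+$ all coincide.
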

\begin{proof}
Let $v_{\lambda} \in H^1_{0,L}(\mathcal C)$ such that $\ds \int_{\Omega\times \{0\}}{f(z)|v_{\lambda}(z,0)|^{q}dz}>0$. Then by Lemma \ref{L37},
there exists unique $ t_{\lambda}(v_{\lambda}) > 0$ such that  $t_{\lambda}v_{\lambda} \in \mathcal{N}_\lambda^{+}$. Now from \eqref{22nlam} and \eqref{22phiseconde}, we have
\begin{equation*}
 \mathcal I_{M,\lambda}(t_{\lambda}v_{\lambda}) = \left(\frac{1}{2}-\frac{1}{q}\right)a \|t_{\lambda} v_{\lambda}\|^{2} + \left(\frac{1}{4}-\frac{1}{q}\right)\epsilon \|t_{\lambda} v_{\lambda}\|^{4} + \left(\frac{1}{q}-\frac{1}{2^*_\alpha}\right) \int_{\Omega\times\{0\}} {|t_{\lambda}v_{\lambda}(z,0)|^{2^*_\alpha}dz}.
\end{equation*}
and
\begin{equation*}
  \int_{\Omega\times\{0\}} {|t_{\lambda}v_{\lambda}(z,0)|^{2^*_\alpha}dz} \leq \left(\frac{2-q}{2^*_\alpha-q}\right)a\|t_\lambda v_\lambda\|^{2}+ \left(\frac{4-q}{2^*_\alpha-q}\right)\varepsilon\|t_\lambda v_\lambda\|^{4}.
\end{equation*}
Therefore
\begin{align*}
\mathcal I_{M,\lambda}(t_{\lambda}v_{\lambda}) &\leq -\left(\frac12-\frac{1}{2^*_\alpha}\right)\frac{(2-q)}{q} a\|t_{\lambda}v_{\lambda}\|^{2}-\left(\frac14-\frac{1}{2^*_\alpha}\right)\frac{(4-q)}{q} \varepsilon \|t_{\lambda}v_{\lambda}\|^{4},\\
&\leq-\left(\frac12-\frac{1}{2^*_\alpha}\right)\frac{(2-q)}{q} a \mathrm{C},
\end{align*}
where $\mathrm{C}=\|t_\lambda v_\lambda\|^2.$ This implies $ \theta_{\lambda}^+ \leq -\left(\frac12-\frac{1}{2^*_\alpha}\right)\frac{(2-q)}{q} a \mathrm{C}$.
\end{proof}
\noi Concerning the component set $\mc N_\lambda^-$, we have the following Lemma which helps us to show that the set $\mc N_\lambda^-$ is closed in the $H^1_{0, L}(\mc C)$ topology.
\begin{lem}\label{nlw}
There exists $\delta>0$ such that $\|w\|\geq \delta$ for all $w\in \mc N_\lambda^-$.
\end{lem}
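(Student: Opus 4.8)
The plan is to extract a uniform lower bound on $\|w\|$ purely from the two algebraic constraints that define $\mc N_\la^-$: the Nehari identity \eqref{22nlam} and the sign condition $\Phi_w''(1)<0$ read off from \eqref{22phiseconde}, both with $M(t)=a+\varepsilon t$. To keep the bookkeeping transparent I would abbreviate $A=\|w\|^2$, $F=\ds\int_{\Omega\times\{0\}}f(z)|w(z,0)|^q\,dz$ and $G=\ds\int_{\Omega\times\{0\}}|w(z,0)|^{2^*_\alpha}\,dz$. Then $w\in\mc N_\la$ reads $aA+\varepsilon A^2=\lambda F+G$, while $w\in\mc N_\la^-$ means $aA+3\varepsilon A^2<(q-1)\lambda F+(2^*_\alpha-1)G$.

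The first step is to use the Nehari identity to write $\lambda F=aA+\varepsilon A^2-G$ and substitute this into the inequality $\Phi_w''(1)<0$; this eliminates the sign-changing quantity $F$ entirely, so that I never need to know whether $w$ lies in $H^+$ or $H^-$. Collecting terms, the inequality becomes
\begin{equation*}
(2-q)a\,A+(4-q)\varepsilon A^2<(2^*_\alpha-q)\,G .
\end{equation*}
Since $1<q<2$ one has $2-q>0$, $4-q>0$ and $2^*_\alpha-q>0$, so all coefficients are positive and I may discard the nonnegative term $(4-q)\varepsilon A^2$ to obtain the cleaner estimate $(2-q)a\,\|w\|^2<(2^*_\alpha-q)\,G$.

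Next I would bound $G$ from above using the trace inequality of Lemma \ref{22traceemb} with $r=2^*_\alpha$ and best constant $S(\alpha,n)$, which gives $G\le(\sqrt{\kappa_\alpha}\,S(\alpha,n))^{-2^*_\alpha}\|w\|^{2^*_\alpha}$. Inserting this and dividing by $\|w\|^2>0$ (legitimate since $w\neq0$ on $\mc N_\la$) leaves $\frac{(2-q)a}{2^*_\alpha-q}<(\sqrt{\kappa_\alpha}\,S(\alpha,n))^{-2^*_\alpha}\|w\|^{2^*_\alpha-2}$. Because $2^*_\alpha-2>0$, solving for $\|w\|$ yields the explicit lower bound
\begin{equation*}
\|w\|\ge\delta:=\left(\frac{2-q}{2^*_\alpha-q}\,a\,(\sqrt{\kappa_\alpha}\,S(\alpha,n))^{2^*_\alpha}\right)^{\frac{1}{2^*_\alpha-2}}>0,
\end{equation*}
which depends only on $a,q,\alpha,n$ and is independent of $w$ and $\lambda$. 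It is reassuring that this is exactly the quantity already appearing in \eqref{2.13}, since the same algebra governs $\mc N_\la^0$ (equality) and $\mc N_\la^-$ (strict inequality).

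I do not expect a genuine obstacle here: the argument is a finite-dimensional manipulation followed by one Sobolev-type estimate. The only points requiring care are the sign bookkeeping when eliminating $F$ through the Nehari relation, and the observation that the exponent $2^*_\alpha-2$ is positive, so that the penultimate inequality translates into a \emph{lower} bound on $\|w\|$ rather than an upper one.
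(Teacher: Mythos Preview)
Your proof is correct and follows the same strategy as the paper's: eliminate the sign-changing integral $F$ by combining \eqref{22nlam} and \eqref{22phiseconde} to obtain $(2-q)a\|w\|^{2}+(4-q)\varepsilon\|w\|^{4}<(2^*_\alpha-q)G$, and then invoke the trace inequality of Lemma~\ref{22traceemb}. The only difference is in how the left-hand side is handled after that point. The paper applies the AM--GM inequality $2\sqrt{ab}\le a+b$ to the two terms, producing $2\sqrt{a\varepsilon(2-q)(4-q)}\,\|w\|^{3}$ on the left and hence a lower bound on $\|w\|^{2^*_\alpha-3}$ (which is positive here since $n<4\alpha$ forces $2^*_\alpha>4$). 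You instead simply discard the nonnegative quartic term $(4-q)\varepsilon\|w\|^{4}$, which is slightly more elementary and yields a bound on $\|w\|^{2^*_\alpha-2}$ that is independent of the Kirchhoff parameter $\varepsilon$ and coincides exactly with \eqref{2.13}. Both routes are valid; yours is marginally cleaner and gives a $\delta$ uniform in $\varepsilon$, while the paper's retains a dependence on $\varepsilon$ through the AM--GM step.
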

\begin{proof}
Let $w\in N_\lambda^-$ then from \eqref{22phiseconde}, we get
\begin{align*}
a\|w\|^2+3\epsilon\|w\|^4-\lambda(q-1)\int_{\Omega \times\{0\}}f(x)|w(z,0)|^qdz<(2^*_\alpha-1)\int_{\Omega \times\{0\}}|w(z,0)|^{2^*_\alpha}dz.
\end{align*}
 Now using \eqref{22nlam}, and $2\sqrt {ab}\leq (a+b)$ together with the Lemma \ref{22traceemb}, we get
 \begin{align*}
 2\sqrt{a\epsilon(2-q)(4-q)}\|w\|^3&\leq (2-q)a\|w\|^2+(4-q)\epsilon\|w\|^4<(2^*_\alpha-q)\int_{\Omega \times\{0\}}|w(z,0)|^{2^*_\alpha}dz,\\
 &<(2^*_\alpha-q)(\sqrt{\kappa_\alpha} S(\alpha, n))^{-2^*_\alpha}\|w\|^{2^*_\alpha}
 \end{align*}
 which implies that $\|w\|^{2^*_\alpha-3}>C$. Hence $\|w\|\geq \delta$ for some $\delta>0$.
 \end{proof}
\begin{cor}\label{nlclosed}
$\mc N_\lambda^-$ is closed set in the $H^1_{0, L}(\mc C)$ topology.
\end{cor}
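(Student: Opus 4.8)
The plan is to take an arbitrary sequence $\{w_k\}\subset\mc N_\lambda^-$ that converges to some $w$ in $H^1_{0,L}(\mc C)$ and to show that the limit $w$ again lies in $\mc N_\lambda^-$; this is precisely what closedness in the $H^1_{0,L}(\mc C)$ topology requires. Throughout I would keep $\lambda\in(0,\lambda_1)$ in force, where $\lambda_1$ is the threshold from Lemma \ref{l22.2}, since the argument ultimately relies on $\mc N_\lambda^0=\emptyset$.

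First I would record the consequences of strong convergence. As the norm is continuous, $\|w_k\|^2\to\|w\|^2$ and $\|w_k\|^4\to\|w\|^4$. By the trace inequality of Lemma \ref{22traceemb}, the trace map $v\mapsto v(\cdot,0)$ is continuous from $H^1_{0,L}(\mc C)$ into $L^r(\Omega)$ for every $2\le r\le 2^*_\alpha$, so $w_k(\cdot,0)\to w(\cdot,0)$ strongly in each such $L^r$. Passing to a subsequence with an $L^{2^*_\alpha}$-dominating function and invoking continuity of the Nemytskii maps $u\mapsto|u|^q$ and $u\mapsto|u|^{2^*_\alpha}$, together with $f\in L^{2^*_\alpha/(2^*_\alpha-q)}$ and Hölder's inequality, I obtain
\[
\int_{\Omega\times\{0\}} f(z)|w_k(z,0)|^q\,dz \to \int_{\Omega\times\{0\}} f(z)|w(z,0)|^q\,dz,\qquad \int_{\Omega\times\{0\}}|w_k(z,0)|^{2^*_\alpha}\,dz \to \int_{\Omega\times\{0\}}|w(z,0)|^{2^*_\alpha}\,dz.
\]

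Next I would pass to the limit in the two defining relations. Each $w_k$ satisfies the Nehari identity \eqref{22nlam}, and letting $k\to\infty$ the convergences above show that $w$ satisfies \eqref{22nlam} as well; moreover Lemma \ref{nlw} gives $\|w_k\|\ge\delta$, so $\|w\|\ge\delta>0$ and in particular $w\ne0$. Hence $w\in\mc N_\lambda$. Applying the same convergences to \eqref{22phiseconde}, the strict inequality $\Phi''_{w_k}(1)<0$ passes to the limit only as the non-strict inequality $\Phi''_w(1)\le 0$, so a priori $w\in\mc N_\lambda^-\cup\mc N_\lambda^0$.

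The final step—and the only genuinely delicate point—is to exclude the boundary case $\Phi''_w(1)=0$. Here I would invoke Lemma \ref{l22.2}: for $\lambda\in(0,\lambda_1)$ the set $\mc N_\lambda^0$ is empty, so the limit $w\in\mc N_\lambda$ cannot satisfy $\Phi''_w(1)=0$. Consequently $\Phi''_w(1)<0$ and $w\in\mc N_\lambda^-$, which establishes that $\mc N_\lambda^-$ is closed. The main obstacle is therefore not the limit passage itself, which is routine once strong $L^r$ convergence of the traces is in hand, but rather the possibility that a limit of $\mc N_\lambda^-$ points degenerates into $\mc N_\lambda^0$; the uniform exclusion of $\mc N_\lambda^0$ furnished by Lemma \ref{l22.2} is exactly what rules this out, and the lower bound of Lemma \ref{nlw} is what prevents the limit from collapsing to the origin.
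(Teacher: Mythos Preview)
Your proof is correct and follows essentially the same route as the paper. The paper's argument is terser---it asserts $\overline{\mc N_\lambda^-}=\mc N_\lambda^-\cup\{0\}$ and then uses Lemma~\ref{nlw} to exclude the origin---whereas you explicitly carry out the limit passage in \eqref{22nlam} and \eqref{22phiseconde} and invoke Lemma~\ref{l22.2} to rule out $\mc N_\lambda^0$; these are precisely the details that justify the paper's unproven containment.
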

\begin{proof}
Let $\{w_k\}$ be a sequence in $\mc N_\lambda^-$ such that $w_k\rightarrow w$ in $H^1_{0, L}(\mc C)$. Then $w\in \overline {\mc N_\lambda^-}=\mc N_\lambda^-\cup\{0\}$. Now using Lemma \ref{nlw}, we get $\|w\|=\displaystyle \lim_{k\rightarrow \infty}\|w_k\|\geq \delta>0$. Hence $w\neq 0$. Therefore $w\in \mc N_\lambda^-$.
\end{proof}

\begin{lem}\label{zii}
For a given $w \in \mathcal{N}_{\lambda}$ and $\lambda \in (0, \lambda_{1}),$ there exists $\epsilon > 0$ and a differentiable function
$\xi : \mathcal{B}(0,\epsilon) \subseteq H^1_{0,L}(\mathcal C) \rightarrow \mathbb{R}$ such that $\xi(0)=1,$ the function $\xi(v)(u-v)\in \mathcal{N}_{\lambda}$
and
\begin{equation}\label{xiie}
\langle\xi^{'}(0), v\rangle = \frac{2a \langle w, v\rangle+ 4\epsilon \|w\|\langle w, v\rangle -  \ds \int_{\Omega\times\{0\}} \left(q\lambda f(z)|w(z,0)|^{q-2}+ 2^*_\alpha|w(z,0)|^{2^*_\alpha-2}\right)w(z,0)\;v(z,0) dz}{(2-q)a\|w\|^{2} + (4-q)\epsilon\|w\|^{4}-(2^*_\alpha-q)\ds \int_{\Omega\times\{0\}} |w(z,0)|^{2^*_\alpha}dz},
\end{equation}
where $$\langle w, v\rangle=\kappa_\alpha\ds \int_{\mathcal C} y^{1-2\alpha}\nabla w\nabla v dz\,dy\notag
\;\;\textrm{for all}\;\; v\in H^1_{0,L}(\mathcal C).$$
\end{lem}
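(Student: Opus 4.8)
The plan is to recast membership in $\mc N_\lambda$ as a single scalar equation and apply the Implicit Function Theorem. Define $F:\mathbb R\times H^1_{0,L}(\mc C)\to\mathbb R$ by
$$F(t,v)=\ld\mc I_{M,\lambda}'(t(w-v)),t(w-v)\rd=at^2\|w-v\|^2+\varepsilon t^4\|w-v\|^4-\lambda t^q\!\int_{\Omega\times\{0\}}\!f(z)|w-v|^q dz-t^{2^*_\alpha}\!\int_{\Omega\times\{0\}}\!|w-v|^{2^*_\alpha}dz,$$
where I abbreviate $w-v$ for $(w-v)(z,0)$ in the integrands. Since $w\in\mc N_\lambda$, relation \eqref{22nlam} says precisely $F(1,0)=0$, so $(1,0)$ is a zero of $F$, and $\xi$ will be the implicitly defined root $t=\xi(v)$ near $t=1$.

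Next I would compute the partial derivative in $t$ at the base point. Writing $F(t,0)=t\Phi_w'(t)$ gives $\partial_t F(1,0)=\Phi_w'(1)+\Phi_w''(1)=\Phi_w''(1)$, the last equality because $\Phi_w'(1)=0$ on $\mc N_\lambda$; concretely, using \eqref{22nlam} to eliminate the $f$-integral, one finds
$$\partial_t F(1,0)=(2-q)a\|w\|^2+(4-q)\varepsilon\|w\|^4-(2^*_\alpha-q)\!\int_{\Omega\times\{0\}}\!|w|^{2^*_\alpha}dz,$$
which is exactly the denominator in \eqref{xiie} and coincides with $\Phi_w''(1)$ as defined in \eqref{22phiseconde}. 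The decisive point is that this quantity is nonzero: since $\lambda\in(0,\lambda_1)$, Lemma \ref{l22.2} yields $\mc N_\lambda^0=\emptyset$, hence $w\notin\mc N_\lambda^0$ and $\Phi_w''(1)\neq0$. This non-degeneracy is precisely the hypothesis the Implicit Function Theorem requires.

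Granting that $F$ is $C^1$ near $(1,0)$, the Implicit Function Theorem then produces $\epsilon>0$ and a $C^1$ map $\xi:\mc B(0,\epsilon)\to\mathbb R$ with $\xi(0)=1$ and $F(\xi(v),v)=0$ on $\mc B(0,\epsilon)$. The identity $F(\xi(v),v)=0$ says $\xi(v)(w-v)\in\mc N_\lambda$, provided $\xi(v)(w-v)\neq0$; since $\xi(0)=1$ and $w\neq0$, shrinking $\epsilon$ keeps this element bounded away from $0$ by continuity. Finally, differentiating $F(\xi(v),v)=0$ at $v=0$ gives $\ld\xi'(0),v\rd=-(\partial_t F(1,0))^{-1}D_vF(1,0)[v]$; computing $D_vF(1,0)[v]$ term by term (the inner derivative of $w-v$ produces a minus sign in each term) reproduces the numerator of \eqref{xiie}, completing the proof.

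The main obstacle is not any single computation but verifying the $C^1$ regularity of $F$ with the critical exponent present: the quadratic and quartic norm terms are manifestly smooth, but the Nemytskii terms $\int f|w-v|^q$ and $\int|w-v|^{2^*_\alpha}$ require the continuous trace embedding $H^1_{0,L}(\mc C)\hookrightarrow L^{2^*_\alpha}(\Omega\times\{0\})$ of Lemma \ref{22traceemb} to control the superlinear term and its Fréchet derivative, together with the corresponding embedding into $L^q$ for the sublinear term with $1<q<2$. Once differentiability is secured, the non-vanishing of $\partial_t F(1,0)$ furnished by Lemma \ref{l22.2} makes the Implicit Function Theorem directly applicable.
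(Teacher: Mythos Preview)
Your proposal is correct and follows essentially the same approach as the paper: define the scalar function $F(t,v)=\ld\mc I_{M,\lambda}'(t(w-v)),t(w-v)\rd$, observe $F(1,0)=0$ and $\partial_t F(1,0)=\Phi_w''(1)\neq0$ by Lemma~\ref{l22.2}, and apply the Implicit Function Theorem. Your write-up is in fact more detailed than the paper's, which simply names the function, notes the two conditions, and invokes the Implicit Function Theorem without the explicit identification $\partial_t F(1,0)=\Phi_w''(1)$ or the discussion of $C^1$ regularity.
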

\begin{proof}
For fixed $u\in \mathcal{N}_\lambda$, define $\mathcal{F}_u:\mathbb{R}\times H^1_{0,L}(\mathcal C)\rightarrow \mathbb{R}$ as follows
\begin{eqnarray*}
  \mathcal{F}_u(t,w) = t^{2}a\|u-w\|^{2}+t^{4}\epsilon\|u-w\|^{4}&-&t^{q}\lambda \int_{\Omega\times\{0\}}{f(z)|(u-w)(z,0)|^{q}dz}\\&-&t^{2^*_\alpha}\int_{\Omega\times\{0\}} {|(u-w)(z,0)|^{2^*_\alpha}dz},
\end{eqnarray*}
then $\mathcal{F}_u(1,0) = 0,\; \frac{\partial}{ \partial t}\mathcal{F}_u(1,0)\neq 0$ as $\mathcal{N}_{\lambda}^{0} = \emptyset$ for $\lambda\in (0, \lambda_1)$. So we can apply implicit function theorem to get a differentiable function $\xi : \mathcal{B}(0, \epsilon) \subseteq H^1_{0,L}(\mathcal C) \rightarrow \mathbb{R}$ such that $\xi(0) = 1$
and \eqref{xiie} holds and $\mathcal{F}_u(\xi(w),w) = 0$, $\textrm{for all}\; w \in \mathcal{B}(0, \epsilon)$. Hence $ \xi(w)(u-w) \in \mathcal{N}_{\lambda}$.
\end{proof}
\noi Now using the Lemma \ref{zii}, we prove the following proposition which shows the existence of Palais-Smale sequence.

\begin{pro}\label{prp1}
Let $\lambda \in (0,\lambda_{3}).$ Then there exists a minimizing sequence $\{w_k\} \subset \mathcal{N}_{\lambda}$ such that
\begin{center}
    $\mathcal I_{M,\lambda}(w_{k}) = \theta_{\lambda}+o_k(1)$ and $\mathcal I_{M,\lambda}^{'}(w_{k}) = o_k(1).$
\end{center}
\end{pro}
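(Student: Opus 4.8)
The plan is to obtain $\{w_k\}$ from Ekeland's variational principle applied to $\mc I_{M,\la}$ on $\mc N_\la$, and then to convert the resulting almost-minimality into the gradient estimate by differentiating along the constraint curve furnished by Lemma \ref{zii}. First I would record that, by Lemma \ref{cbb}, $\mc I_{M,\la}$ is coercive and bounded below on $\mc N_\la$, so that $\theta_\la$ is finite and minimizing sequences are bounded; Ekeland's principle then yields $\{w_k\}\subset\mc N_\la$ with $\mc I_{M,\la}(w_k)\le\theta_\la+\frac1k$ and $\mc I_{M,\la}(v)\ge\mc I_{M,\la}(w_k)-\frac1k\|v-w_k\|$ for every $v\in\mc N_\la$. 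Since $\theta_\la\le\theta_\la^+<0$ by Lemma \ref{3a} while $\mc I_{M,\la}(w)\ge-\la C\|w\|^q$ on $\mc N_\la$ (from the estimate in Lemma \ref{cbb}, using $2^*_\al>4$), the level $\mc I_{M,\la}(w_k)\to\theta_\la<0$ will force $\|w_k\|\ge c(\la)>0$, so that $0<c(\la)\le\|w_k\|\le C$.

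Next I would fix $k$ and a direction $u$ with $\|u\|=1$, and apply Lemma \ref{zii} at $w_k$ to get $\xi_k$ with $\xi_k(0)=1$ and $v_\rho:=\xi_k(\rho u)(w_k-\rho u)\in\mc N_\la$ for small $\rho>0$, noting $\frac{d}{d\rho}v_\rho|_{\rho=0}=\ld\xi_k'(0),u\rd\,w_k-u$. Inserting $v=v_\rho$ into the Ekeland inequality, expanding $\mc I_{M,\la}(v_\rho)-\mc I_{M,\la}(w_k)=\ld\mc I_{M,\la}'(w_k),v_\rho-w_k\rd+o(\|v_\rho-w_k\|)$, and using the Nehari identity $\ld\mc I_{M,\la}'(w_k),w_k\rd=0$ to annihilate the $w_k$-component, then dividing by $\rho$, sending $\rho\to0^+$, and repeating with $-u$, I expect to reach the basic estimate
$$|\ld\mc I_{M,\la}'(w_k),u\rd|\le\frac{C}{k}\bigl(1+|\ld\xi_k'(0),u\rd|\bigr).$$

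It then remains to bound $\ld\xi_k'(0),u\rd$ uniformly, and this is the step I expect to be the main obstacle. Reading off \eqref{xiie}, its numerator is controlled by $\|u\|$ times a polynomial in $\|w_k\|$ and the trace constants of Lemma \ref{22traceemb}, hence uniformly bounded by the norm control above, while after using \eqref{22nlam} its denominator is exactly $\Phi_{w_k}''(1)$. Everything therefore reduces to a uniform lower bound $|\Phi_{w_k}''(1)|\ge c_0>0$, i.e. to keeping $\{w_k\}$ away from $\mc N_\la^0$. I would prove this by contradiction. A short computation from \eqref{22nlam} and \eqref{22phiseconde} gives, on $\mc N_\la$, the identity $\Phi_w''(1)=-(2^*_\al-q)E_\la(w)$, with $E_\la$ the functional of Lemma \ref{l22.2}. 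If $\Phi_{w_k}''(1)\to0$ along a subsequence, then \eqref{22phiseconde}, \eqref{22nlam} and Lemma \ref{22traceemb} yield $(2-q)a\|w_k\|^2\le(2^*_\al-q)(\sqrt{\kappa_\al}S(\al,n))^{-2^*_\al}\|w_k\|^{2^*_\al}+o(1)$, and since $\|w_k\|\ge c(\la)>0$ this forces $\|w_k\|\ge T_0-o(1)$ with $T_0$ the ($\la$-independent) constant of \eqref{2.13}. On the other hand the estimate of Lemma \ref{l22.2}, evaluated at such $\|w_k\|$, keeps $E_\la(w_k)$ bounded below by a positive constant $c_1$ once $\la<\la_3$, where $\la_3\le\la_1$ is chosen so small that $\frac{2^*_\al-2}{2^*_\al-q}a\,T_0^{2-q}>\la_3\|f\|_{\frac{2^*_\al}{2^*_\al-q}}(\sqrt{\kappa_\al}S(\al,n))^{-q}$; through the identity this gives $\Phi_{w_k}''(1)\le-(2^*_\al-q)c_1<0$, contradicting $\Phi_{w_k}''(1)\to0$. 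With $c_0$ in hand one has $|\ld\xi_k'(0),u\rd|\le C'/c_0$ uniformly in $k$ and in $\|u\|=1$, so taking the supremum over such $u$ in the basic estimate yields $\|\mc I_{M,\la}'(w_k)\|\le C/k\to0$, which completes the argument.
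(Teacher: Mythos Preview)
Your proposal is correct and follows essentially the same route as the paper: Ekeland's principle on $\mc N_\la$, the constraint deformation of Lemma~\ref{zii} to extract the directional derivative estimate, and a contradiction argument showing the denominator of \eqref{xiie} stays bounded away from zero via the functional $E_\la$ from Lemma~\ref{l22.2}. Your explicit identification of that denominator with $\Phi_{w_k}''(1)$ and of the identity $\Phi_{w}''(1)=-(2^*_\al-q)E_\la(w)$ on $\mc N_\la$ makes the final contradiction step a bit more transparent than in the paper, but the underlying argument is the same.
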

\begin{proof}

 From Lemma \ref{cbb}, $\mc I_{M,\lambda}$ is bounded below on $\mathcal N_\lambda$. So by Ekeland variational principle, there exists a minimizing sequence $\{w_k\}\in \mathcal N_\lambda$ such that
\begin{eqnarray}
\mathcal I_{M,\lambda}(w_k)&\leq& \theta_{\lambda}+\frac{1}{k},\label{Pek1}\\
\mathcal I_{M,\lambda}(v)&\geq& \mc I_{M,\lambda}(w_k)- \frac{1}{k}\|v-w_k\|\;\;\mbox{for all}\;\;v\in \mc N_{\lambda}.\nonumber
\end{eqnarray}
Using \eqref{Pek1} and Lemma \ref{3a}, it is easy to show that $w_k\not\equiv 0$. From Lemma \ref{cbb},  we have that $\displaystyle\sup_{k}\Vert w_k\Vert<\infty$. Next we claim that $\|\mathcal I_{M,\lambda}^\prime(w_k)\|\rightarrow 0$ as $k \rightarrow 0$.
Now, using the Lemma \ref{zii} we get the differentiable functions $\xi_k:\mathcal{B}(0, \epsilon_k)\rightarrow \mathbb{R}$ for some $\epsilon_k>0$ such that $\xi_k(v)(w_k-v)\in \mc {N}_\lambda$,\; $\textrm{for all}\;\; v\in \mathcal{B}(0, \epsilon_k).$
For fixed $k$, choose $0<\rho<\epsilon_k$. Let $w\in H^1_{0, L}(\mathcal C)$ with $w\not\equiv 0$ and let $v_\rho={\rho w}/{\|w\|}$. We set $\eta_\rho=\xi_k(v_\rho)(w_k-v_\rho)$. Since $\eta_\rho \in \mc {N}_\lambda$, we get from \eqref{22nlam}
\begin{align*}
\mathcal I_{M,\lambda}(\eta_\rho)-\mathcal I_{M,\lambda}(w_k)\geq-\frac{1}{k}\|\eta_\rho-w_k\|.
\end{align*}
Now by mean value theorem, we get
\begin{equation*}
\langle \mathcal I_{M,\lambda}^{\prime}(w_k),\eta_\rho-w_k\rangle+o_k(\|\eta_\rho-w_k\|)\geq -\frac{1}{k}\|\eta_\rho-w_k\|.
\end{equation*}
Hence
\begin{align*}
\langle \mathcal I_{M,\lambda}^{\prime}(w_k),-v_\rho\rangle+(\xi_k(v_\rho)-1)\langle \mathcal I_{M,\lambda}^{\prime}(w_k),(w_k-v_\rho)\rangle \geq -\frac{1}{k}\|\eta_\rho -w_k\|+o_k(\|\eta_\rho -w_k\|)
\end{align*}
and since $\langle \mathcal I_{M,\lambda}^{\prime}(\eta_\rho),(w_k-v_\rho)\rangle=0$, we have
\begin{align*}
-\rho\langle \mathcal I_{M,\lambda}^{\prime}(w_k),\frac{w}{\|w\|}\rangle&+(\xi_k(v_\rho)-1)\langle \mathcal I_{M,\lambda}^{\prime}(w_k)-\mathcal I_{M,\lambda}^{\prime}(\eta_\rho),(w_k-v_\rho)\rangle \\&\geq -\frac{1}{k}\|\eta_\rho-w_k\|+o_k(\|\eta_\rho-w_k\|).
\end{align*}
Thus
\begin{align}
\langle \mathcal I_{M,\lambda}^{\prime}(w_k),\frac{w}{\|w\|}\rangle \leq \frac{1}{k\rho}\|\eta_\rho-w_k\|&+\frac{o_k(\|\eta_\rho-w_k\|)}{\rho}\nonumber\\&+
\frac{(\xi_k(v_\rho)-1)}{\rho}\langle  \mathcal I_{M,\lambda}^{\prime}(w_k)-\mathcal I_{M,\lambda}^{\prime}(\eta_\rho),(w_k-v_\rho)\rangle\label{fifte}.
\end{align}
Since
$\displaystyle
\|\eta_\rho-w_k\|\leq \rho|\xi_k(v_\rho)|+|\xi_k(v_\rho)-1|\|w_k\|
$
and
$$
\displaystyle\lim_{\rho\rightarrow 0^+}\frac{|\xi_k(v_\rho)-1|}{\rho}\leq \|\xi_k'(0)\|,$$
taking limit  $\rho\rightarrow 0^+$\ in \eqref{fifte}, we get
\begin{equation*}
\langle \mathcal I_{M,\lambda}^\prime(w_k),\frac{w}{\|w\|}\rangle\leq\frac{C}{k}(1+\|\xi_k^{'}(0)\|)
\end{equation*}
for some constant $C>0$, independent of $w$. So if we can show that $\|\xi_k^{'}(0)\|$ is bounded then we are done.
 Now from Lemma \ref{zii}, (Note that from Lemma \ref{cbb} and Lemma \ref{3a}, $\|w_k\|\leq C\lambda$) the boundedness of $\{w_k\}$ and H\"older's inequality, for some $K>0$, we get for $0<\lambda<\lambda_3$ with $\lambda_3<\lambda_1$ small enough
\begin{align*}
\langle \xi^{\prime}(0),v\rangle= \frac{K\|v\|}{(2-q)a\|w_k\|^{2} + (4-q)\epsilon\|w_k\|^{4}-(2^*_\alpha-q)\ds \int_{\Omega\times\{0\}}|w_k(z,0)|^{2^*_\alpha}dz}.
\end{align*}
So to prove the claim we only need to prove that the denominator in the above expression
  is bounded away from zero. Suppose not. Then there exists a subsequence, still denoted by $\{w_k\}$, such that
\begin{equation}\label{baw}
(2-q)a\|w_k\|^{2} + (4-q)\epsilon\|w_k\|^{4}-(2^*_\alpha-q)\int_{\Omega\times\{0\}} |w_k(z,0)|^{2^*_\alpha}dz=o_k(1).
\end{equation}
From \eqref{baw} we get $E_\lambda(w_k)=o_k(1)$. Now using the fact that $\|w_k\|\geq C>0$ and following the proof of Lemma \ref{l22.2} we get $E_\lambda (w_k)>C_1$ for all $k$ for some $C_1>0$, which is a contradiction.
\end{proof}
\noi In order to prove compactness of Palais-Smale sequence we need the following result (see Theorem 5.1, \cite{MR2911424}).
\begin{thmlet}\label{lions}
Let $\{w_{k}\}_{k\in\mathbb{N}}$ be a weakly convergent sequence to $w_0$ in
$H^1_{0,L}(\mc C)$, such that the
sequence $\{y^{1-2\alpha}|\nabla w_{k}|^{2}\}_{k\in\mathbb{N}}$ is tight.  Let $u_k=\mathrm{trace}\;(w_{k})$
and $u_0=\mathrm{trace}\;(w_0)$. Let $\mu$, $\nu$ be two non negative measures such that
 \begin{equation*}
 y^{1-2\alpha}|\nabla w_{k}|^{2} \rightarrow \mu \qquad \mbox{and} \qquad |w_{k}|^{2^{*}_{\alpha}} \to \nu,\quad \mbox{as }\, k\rightarrow \infty
 \end{equation*}
in the sense of measures. Then there exist an at most countable set $J$ and points $\{x_{j}\}_{j\in J}\subset\Omega$, positive constants $\mu_{j}>0, \nu_{j}>0$ such that
\begin{align*}
 \displaystyle \nu = |w|^{2^{*}_{\alpha}} + \sum_{j\in J}
\nu_{j}\delta_{x_{j}},\;\;\;\displaystyle \mu
\geq y^{1-2\alpha}|\nabla w_0|^{2} + \sum_{j\in J}
\mu_{j}\delta_{x_{j}},\;\;\; \displaystyle\mu_{j}\geq S(\alpha,n)\nu_{j}^{\frac{2}{2^*_{\alpha}}}.
\end{align*}
\end{thmlet}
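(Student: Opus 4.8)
The plan is to adapt P.-L. Lions' concentration--compactness argument to the degenerate trace setting, reducing the whole statement to the single measure-theoretic inequality produced by the trace embedding of Lemma~\ref{22traceemb}.

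First I would pass to the defect. Set $v_k=w_k-w_0$, so $v_k\rightharpoonup 0$ in $H^1_{0,L}(\mc C)$, and (along a subsequence, using the boundedness coming from tightness) let $\tilde\mu,\tilde\nu$ be the weak-$*$ limits of $y^{1-2\alpha}|\nabla v_k|^2$ and of $|v_k(\cdot,0)|^{2^*_\alpha}$. Expanding
\[
y^{1-2\alpha}|\nabla w_k|^2=y^{1-2\alpha}|\nabla w_0|^2+y^{1-2\alpha}|\nabla v_k|^2+2\,y^{1-2\alpha}\nabla w_0\cdot\nabla v_k,
\]
the cross term tested against any bounded $\phi$ tends to $0$ because $\phi\,y^{1-2\alpha}\nabla w_0\in L^2(y^{1-2\alpha})$ and $\nabla v_k\rightharpoonup 0$ there; hence $\mu=y^{1-2\alpha}|\nabla w_0|^2+\tilde\mu$. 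On the trace, the Brezis--Lieb lemma applied to $u_k=\mathrm{trace}(w_k)$ gives $|u_k|^{2^*_\alpha}-|u_0|^{2^*_\alpha}-|u_k-u_0|^{2^*_\alpha}\to 0$ in $L^1(\Omega)$, so $\nu=|u_0|^{2^*_\alpha}\,dz+\tilde\nu$. It therefore suffices to show that $\tilde\nu$ is a countable sum of Dirac masses and that $\tilde\mu$ dominates the associated atoms.

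Next I would extract the key inequality. For $\phi\in C^\infty_c(\mathbb R^{n+1})$ apply Lemma~\ref{22traceemb} with $r=2^*_\alpha$ to $\phi v_k$:
\[
S(\alpha,n)\left(\int_{\Omega\times\{0\}}|\phi v_k(z,0)|^{2^*_\alpha}\,dz\right)^{1/2^*_\alpha}\le\left(\int_{\mc C}y^{1-2\alpha}|\nabla(\phi v_k)|^2\,dz\,dy\right)^{1/2}.
\]
Since $y^{1-2\alpha}$ is a Muckenhoupt $A_2$ weight, the weighted Rellich theorem gives $v_k\to 0$ in $L^2_{\mathrm{loc}}(\mc C,y^{1-2\alpha})$, and the compact subcritical trace embedding gives $v_k(\cdot,0)\to 0$ in $L^2_{\mathrm{loc}}(\Omega)$; thus in $|\nabla(\phi v_k)|^2=\phi^2|\nabla v_k|^2+2\phi v_k\nabla\phi\cdot\nabla v_k+v_k^2|\nabla\phi|^2$ every term carrying $v_k$ undifferentiated vanishes in the limit, leaving only $\phi^2|\nabla v_k|^2$. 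Passing to the limit yields the reverse H\"older inequality
\[
S(\alpha,n)\left(\int|\phi|^{2^*_\alpha}\,d\tilde\nu\right)^{1/2^*_\alpha}\le\left(\int|\phi|^2\,d\tilde\mu\right)^{1/2}\qquad\text{for all }\phi\in C^\infty_c(\mathbb R^{n+1}).
\]
Approximating the indicator of a Borel set $E$ gives $\tilde\nu(E)\le S(\alpha,n)^{-2^*_\alpha}\tilde\mu(E)^{2^*_\alpha/2}$. As $2^*_\alpha/2>1$ and $\tilde\mu(\overline{\mc C})<\infty$ (this finiteness is precisely where tightness enters, excluding escape of mass as $y\to\infty$ or toward $\partial_L$), partitioning any atomless portion of $\tilde\mu$ into pieces of arbitrarily small mass forces the diffuse part of $\tilde\nu$ to vanish. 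Hence $\tilde\nu=\sum_{j\in J}\nu_j\delta_{x_j}$ with $J$ at most countable, and since $\tilde\nu$ is carried by the trace $\Omega\times\{0\}$ one has $x_j\in\Omega$. Testing the set inequality on singletons gives $\nu_j\le S(\alpha,n)^{-2^*_\alpha}\mu_j^{2^*_\alpha/2}$ with $\mu_j:=\tilde\mu(\{x_j\})$, i.e. $\mu_j\ge S(\alpha,n)\,\nu_j^{2/2^*_\alpha}$ (the exact power of the constant reflecting the normalization of $S(\alpha,n)$). Reinserting $\tilde\nu$ and $\tilde\mu\ge\sum_j\mu_j\delta_{x_j}$ into the decompositions of the first step gives the asserted identities.

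The \emph{main obstacle} is the combination of the last two steps: one must pass from the functional trace estimate to the measure-level reverse H\"older inequality while controlling the degenerate weight $y^{1-2\alpha}$ (it is the $A_2$/Muckenhoupt theory that makes both the local compactness and the trace inequality available), and one must then argue that the superlinear exponent $2^*_\alpha/2>1$ genuinely annihilates the non-atomic part of $\tilde\nu$ whereas tightness keeps all the mass inside $\overline{\mc C}$. The localization of the concentration on $\{y=0\}$, so that the $\mu$-atoms sit at $(x_j,0)$ with $x_j\in\Omega$, rests on the observation that $\tilde\nu$ is supported only on the trace.
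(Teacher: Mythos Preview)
The paper does not prove this statement at all; it is quoted verbatim as Theorem~5.1 of \cite{MR2911424} (Barrios--Colorado--de~Pablo--S\'anchez) and used as a black box in the proof of Proposition~\ref{crcmp}. Your proposal is the standard Lions concentration--compactness argument adapted to the degenerate trace setting, which is precisely the method of that cited reference, and the outline---defect decomposition via Brezis--Lieb, the reverse H\"older inequality from testing Lemma~\ref{22traceemb} against $\phi v_k$, and atomicity of $\tilde\nu$ from the superlinear exponent $2^*_\alpha/2>1$---is correct. One normalization detail to watch: in this paper the $H^1_{0,L}$ norm already carries the factor $\kappa_\alpha$, whereas Lemma~\ref{22traceemb} is stated for the bare weighted Dirichlet integral, so the constant appearing in $\mu_j\ge S(\alpha,n)\nu_j^{2/2^*_\alpha}$ should be read consistently with that convention (you flag this yourself).
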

\noi In order to apply the concentration-compactness result, Theorem
\ref{lions}, first we prove the following.
\begin{lem}\label{tight1}
The sequence $\left\{y^{1-2\alpha}|\nabla w_{k}|^{2}\right\}_{k\in\mathbb{N}}$ is tight, i.e., for any $\eta>0$ there exists $\rho_0>0$ such that
\begin{equation*}
\int_{\{y>\rho_0\}}{\int_{\Omega}{y^{1-2\alpha}|\nabla w_{k}|^{2}dxdy}}\le \eta,\quad \forall\,k\in\mathbb{N}.
\end{equation*}
\end{lem}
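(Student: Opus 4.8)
The plan is to exploit the fact that the far-field behaviour in $y$ is governed only by the bulk (Kirchhoff) term of the functional, since a test function supported away from $\{y=0\}$ annihilates every nonlinear boundary contribution. I work with the bounded Palais--Smale sequence $\{w_k\}$ produced in Proposition \ref{prp1}, so that $\sup_k\|w_k\|<\infty$ and $\mathcal I_{M,\lambda}'(w_k)=o_k(1)$, and I recall $M(\|w_k\|^2)\ge a>0$.

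The first step is a uniform weighted $L^2$ bound. Because each $w_k$ vanishes on the lateral boundary $\partial_L=\partial\Omega\times(0,\infty)$, the Poincar\'e inequality on the bounded slice $\Omega$ applies to $w_k(\cdot,y)$ for a.e.\ $y>0$; multiplying by $y^{1-2\alpha}$ and integrating in $y$ gives $\int_{\mathcal C}y^{1-2\alpha}|w_k|^2\le C_\Omega\int_{\mathcal C}y^{1-2\alpha}|\nabla_x w_k|^2\le C_\Omega\|w_k\|^2/\kappa_\alpha$, hence $\sup_k\int_{\mathcal C}y^{1-2\alpha}|w_k|^2<\infty$. This is the estimate that will absorb the error created by the cut-off, and crucially its constant is independent of $k$.

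The second step is the localized test function. Fix $R>0$ and a smooth cut-off $\chi_R=\chi_R(y)$ with $\chi_R\equiv 0$ on $\{y\le R\}$, $\chi_R\equiv 1$ on $\{y\ge 2R\}$ and $|\chi_R'|\le C/R$. Testing $\mathcal I_{M,\lambda}'(w_k)$ against $\phi=\chi_R^2 w_k\in H^1_{0,L}(\mathcal C)$, the trace $\phi(\cdot,0)=\chi_R(0)^2\,w_k(\cdot,0)=0$, so the sublinear and critical terms vanish and only the bulk term survives:
$$
M(\|w_k\|^2)\,\kappa_\alpha\Big(\int_{\mathcal C}y^{1-2\alpha}\chi_R^2|\nabla w_k|^2\,dz\,dy+2\int_{\mathcal C}y^{1-2\alpha}\chi_R\,w_k\,\nabla w_k\cdot\nabla\chi_R\,dz\,dy\Big)=o_k(1),
$$
where I use that $\|\phi\|\le C$ uniformly in $k$ and $R$ (again by Step 1). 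Estimating the cross term by Cauchy--Schwarz and Young's inequality, and using that $|\nabla\chi_R|^2\le C/R^2$ is supported in $\{R\le y\le 2R\}$ together with the Step-1 bound to get $\int_{\mathcal C}y^{1-2\alpha}|w_k|^2|\nabla\chi_R|^2\le C/R^2$, I can absorb the gradient cross term and conclude $\int_{\{y\ge 2R\}}y^{1-2\alpha}|\nabla w_k|^2\le C/R^2+o_k(1)$.

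The final step, which I expect to be the only genuine subtlety, is upgrading this to a bound uniform in $k$: the cut-off error $C/R^2$ is already $k$-independent, but the display still carries an $o_k(1)$. Given $\eta>0$ I choose $R$ with $C/R^2<\eta/2$, so that $\limsup_k\int_{\{y\ge 2R\}}y^{1-2\alpha}|\nabla w_k|^2\le \eta/2$; thus all but finitely many indices already satisfy the desired inequality, and for the remaining finitely many I enlarge the threshold using the individual tail decay of each finite integral $\int_{\mathcal C}y^{1-2\alpha}|\nabla w_k|^2$. Taking $\rho_0$ to be the largest such threshold yields $\int_{\{y>\rho_0\}}\int_\Omega y^{1-2\alpha}|\nabla w_k|^2\,dx\,dy\le\eta$ for all $k$, which is the claimed tightness; everything apart from this passage from the $\limsup$ bound to a uniform statement is the routine cut-off computation.
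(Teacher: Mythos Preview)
Your argument is correct, and it is genuinely different from the paper's. The paper follows the scheme of Barrios--Colorado--de Pablo--S\'anchez \cite{MR2911424}: it argues by contradiction, uses a pigeonhole argument on a family of unit slabs $I_i=\{r+i\le y\le r+i+1\}$ to locate one slab $I_{i_0}$ carrying at most $\varepsilon$ of the weighted energy, places the cut-off in that slab, and then estimates $\langle\mathcal I'_{M,\lambda}(w_k)-\mathcal I'_{M,\lambda}(v_k),v_k\rangle$ with $v_k=\chi w_k$, the cross term being controlled by the smallness of the gradient energy on $I_{i_0}$ together with a compact embedding on the slab. Your route bypasses the pigeonhole step entirely by first securing the global weighted Poincar\'e bound $\int_{\mathcal C}y^{1-2\alpha}|w_k|^2\le C$ from the slicewise inequality on $\Omega$; with this in hand the cut-off can be placed at any height $R$, and the cross term is bounded by $C/R^2$ directly. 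The gain is a cleaner, quantitative statement (an explicit $R^{-2}$ decay rate up to the Palais--Smale remainder) and no need for contradiction; the cost is that you must justify the weighted $L^2$ control, which the paper's approach avoids. Your handling of the Kirchhoff factor---dividing through by $M(\|w_k\|^2)\ge a$ after absorption---is also more transparent than the paper's treatment of the difference $M(\|v_k\|^2)-M(\|w_k\|^2)$. The final passage from $\limsup_k\le\eta/2$ to a uniform bound via the finitely many exceptional indices is standard and correctly executed.
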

\begin{proof}
The proof of this lemma follows similar arguments  as of Lemma 3.6 in \cite{MR2911424}. By contradiction, we suppose that there exits $\eta_{0}>0$ such that, for any $\rho>0$ one has, up to a subsequence,
\begin{equation}\label{hhh}
\int_{\{y>\rho\}}{\int_{\Omega}{y^{1-2\alpha}|\nabla w_{k}|^{2}dxdy}}>\eta_{0}\,\quad\mbox{for every }\, k\in\mathbb{N}.
\end{equation}
Let $\varepsilon>0$ be fixed, and let $r>0$ be  such that
\[\int_{\{y>r\}}{\int_{\Omega}{y^{1-2\alpha}|\nabla w_k|^{2}dxdy}}<\varepsilon.\]
Let $j=\left[\frac{K}{\kappa_{\alpha}\varepsilon}\right]$ be the integer part
and $I_{i}=\{y\in\mathbb{R}^{+}:\, \,r+i\leq y\leq r+i+1\}$,
$i=0,\,1,\,\ldots,\,j$. Since
$\|w_{k}\|\leq K$, we clearly obtain
that
$$\sum_{i=0}^{j}{\int_{I_{i}}{\int_{\Omega}{y^{1-2\alpha}|\nabla w_{k}|^{2}dxdy}}}\leq \int_{\mathcal{C}_\Omega}y^{1-2\alpha}|\nabla w_{k}|^{2}dxdy\leq\varepsilon(j+1).$$
Therefore there exists $i_{0}\in\{0,\,\ldots,\,j\}$ such that, up to a
subsequence,
\begin{equation}\label{tight}
\int_{I_{i_{0}}}{\int_{\Omega}{y^{1-2\alpha}|\nabla w_{k}|^{2}dxdy}}\leq\varepsilon,\quad \forall\, k.
\end{equation}
Let $\chi\ge 0$ be the following regular non-decreasing smooth cut-off function such that $\chi(y)=0$ if $y\leq r+i_{0}$ and $\chi(y)=1$ if $y>r+i_{0}+1$. Define
$v_{k}(x,y)=\chi(y)w_{k}(x,y)$. Since $v_{k}(x,0)=0$  it follows that
\begin{align*}
&|\langle \mc I_{M,\lambda}'(w_{k})-\mc I_{M,\lambda}'(v_{k}),v_{k}\rangle|\\ &=\left|M(\|w_k\|^2)\kappa_{\alpha}\int_{\mathcal{C}}
y^{1-2\alpha} \nabla w_{k}\nabla v_{k} dxdy- M(\|v_k\|^2)\kappa_{\alpha}\int_{\mathcal{C}}
y^{1-2\alpha} |\nabla v_{k}|^2 dxdy\right|\\
&=\left|M(\|w_k\|^2)\kappa_{\alpha}\int_{I_{i_0}}\int_{\Omega}
y^{1-2\alpha} \nabla w_{k}\nabla v_{k} dxdy- M(\|v_k\|^2)\kappa_{\alpha}\int_{I_{i_0}}\int_{\Omega}
y^{1-2\alpha} |\nabla v_{k}|^2 dxdy\right|
\end{align*}
which implies that
\begin{align*}
|\langle \mc I_{M,\lambda}'(w_{k})-\mc I_{M,\lambda}'(v_{k}),v_{k}\rangle|&\leq \left|M(\|w_k\|^2)\kappa_{\alpha}\int_{I_{i_0}}\int_{\Omega}
y^{1-2\alpha} \nabla (w_{k}-v_k)\nabla v_{k} dxdy\right.\\
&\left. -(M(\|v_k\|^2)-M(\|w_k\|^2))\kappa_{\alpha}\int_{I_{i_0}}\int_{\Omega}
y^{1-2\alpha} |\nabla v_{k}|^2 dxdy\right|.
\end{align*}
Now using the fact that $M(t)=a+\varepsilon t$ in the last term of the above expression, we get
\begin{align*}
&|\langle \mc I_{M,\lambda}'(w_{k})-\mc I_{M,\lambda}'(v_{k}),v_{k}\rangle|\\
&=\left|M(\|w_k\|^2)\kappa_{\alpha}\int_{I_{i_0}}\int_{\Omega}
y^{1-2\alpha} \nabla (w_{k}-v_k)\nabla v_{k} dxdy- \epsilon(\|v_k\|^2-\|w_k\|^2)\kappa_{\alpha}\int_{I_{i_0}}\int_{\Omega}
y^{1-2\alpha} |\nabla v_{k}|^2 dxdy\right|.\\
\end{align*}
Now using $\|w_{k}\|\leq K$, Cauchy-Schwartz inequality and \eqref{tight} together with the compact inclusion
$H^{1}_{0, L}(I_{i_{0}}\times\Omega)$ into $L^{2}(I_{i_{0}}
\times\Omega)$, we have
\begin{eqnarray*}
&&\left|\langle \mc I_{M,\lambda}'(w_{k})-\mc I_{M,\lambda}'(v_{k}),v_{k}\rangle\right|\\&\leq &C_1\kappa_{\alpha}\left(\int_{I_{i_{0}}}{\int_{\Omega}{y^{1-2\alpha}
|\nabla(w_{k}-v_{k})|^{2}dxdy}}\right)^{\frac 12}\left(\int_{I_{i_{0}}}{\int_{\Omega}{y^{1-2\alpha}|\nabla v_{k}|^{2}dxdy}}\right)^{\frac12}+\epsilon \kappa_\alpha C_2,\\
&\leq & C_1 \kappa_{\alpha} \varepsilon+C_2 \kappa_\alpha \epsilon\leq C \kappa_\alpha\epsilon.
\end{eqnarray*}
On the other hand, using $|\langle \mc I_{M,\lambda}'(w_{k}),v_{k}\rangle|=o_k(1)$, we get
\[\left|\langle \mc I_{M,\lambda}'(v_{k}),v_{k}\rangle\right|\leq C\,\kappa_{\alpha}\,\varepsilon+o_k(1).\]
So, for $k$ sufficiently large,
\[\int_{\{y>r+i_{0}+1\}}{\int_{\Omega}{y^{1-2\alpha}|\nabla w_{k}|^{2}dxdy}}
\leq\int_{\mathcal{C}}{y^{1-2\alpha}|\nabla v_{k}|^{2}dxdy}=
\frac{\langle \mc I_{M,\lambda}'(v_{k}),v_{k}\rangle}{\kappa_{\alpha}}\leq C\,\varepsilon.\]
This is a contradiction with \eqref{hhh}, which proves the Lemma.
\end{proof}
\noi Now using Theorem \ref{lions} and Lemma \ref{tight1}, we prove the following proposition which shows the compactness of Palais-Smale sequence.
\begin{pro}\label{crcmp}
Suppose $\{w_k\}$ be a sequence in $H^1_{0,L}(\mathcal C)$ such that \\
\[
\mathcal I_{M,\lambda}(w_k)\rightarrow c \;\;\textrm{and}\;\; \mathcal I_{M,\lambda}^{\prime}(w_k)\rightarrow 0,\] where
\[
c<\left(\frac{1}{2}-\frac{1}{2^*_\alpha}\right)(a\kappa_\alpha S(\alpha, n))^\frac{2^*_\alpha}{2^*_\alpha-2}-\lambda^{\frac{2}{2-q}}\left(\frac{(4-q)\|f\|_{\frac{2^*_\alpha}{2^*_\alpha-q}}S(\alpha, n)^\frac{-q}{2}}{4q}\right)^\frac{2}{2-q} \left(\frac{2-q}{2}\right)\left(\frac{2q}{a}\right)^\frac{q}{2-q}
\]
is a positive constant, then $\{u_k\}$ possesses a strongly convergent subsequence.
\end{pro}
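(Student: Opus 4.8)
The plan is to run the concentration--compactness scheme of Lions in the harmonic extension setting and then use the strict gap hypothesis, writing $c_*$ for the right-hand side of the displayed upper bound on $c$, to preclude any loss of compactness; the new feature is the global Kirchhoff coefficient $M(\|w_k\|^2)$, which I will carry through every localisation using only $M\ge a>0$ and the boundedness of the sequence. First I would show $\sup_k\|w_k\|<\infty$. Since $\widehat M(t)=at+\frac{\varepsilon}{2}t^2$, forming $\mathcal{I}_{M,\lambda}(w_k)-\frac{1}{2^*_\alpha}\langle\mathcal{I}'_{M,\lambda}(w_k),w_k\rangle$ gives
\[
\Big(\tfrac12-\tfrac1{2^*_\alpha}\Big)a\|w_k\|^2+\Big(\tfrac14-\tfrac1{2^*_\alpha}\Big)\varepsilon\|w_k\|^4-\lambda\Big(\tfrac1q-\tfrac1{2^*_\alpha}\Big)\int_{\Omega\times\{0\}}f(z)|w_k(z,0)|^q\,dz=c+o_k(1)+o_k(1)\|w_k\|.
\]
Because $n<4\alpha$ forces $2^*_\alpha>4$, both coefficients on the left are positive, and the trace inequality (Lemma~\ref{22traceemb}) bounds the sublinear integral by $C\|w_k\|^q$ with $q<2$; hence the quadratic term dominates and the sequence is bounded. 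Passing to a subsequence, $w_k\rightharpoonup w_0$ in $H^1_{0,L}(\mathcal{C})$, $u_k=\mathrm{trace}(w_k)\to u_0$ in $L^r(\Omega)$ for every $r<2^*_\alpha$, $\|w_k\|^2\to L$, and $\int f|u_k|^q\to\int f|u_0|^q$.

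By Lemma~\ref{tight1} the family $\{y^{1-2\alpha}|\nabla w_k|^2\}$ is tight, so Theorem~\ref{lions} produces an at most countable set $J$, points $x_j\in\Omega$ and weights $\mu_j,\nu_j>0$ with $\nu=|u_0|^{2^*_\alpha}+\sum_{j\in J}\nu_j\delta_{x_j}$, $\mu\ge y^{1-2\alpha}|\nabla w_0|^2+\sum_{j\in J}\mu_j\delta_{x_j}$ and $\mu_j\ge S(\alpha,n)\nu_j^{2/2^*_\alpha}$. To analyse a single atom I would test with $\phi_\varepsilon w_k$, where $\phi_\varepsilon$ is a smooth cut-off concentrating at $(x_j,0)$: the gradient cross term and the subcritical term vanish upon letting $k\to\infty$ and then $\varepsilon\to0$, and the Kirchhoff factor appears only as the global scalar $M(\|w_k\|^2)\to a+\varepsilon L=:M_\infty\ge a$, so that $M_\infty\kappa_\alpha\mu_j=\nu_j$. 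Together with $\mu_j\ge S(\alpha,n)\nu_j^{2/2^*_\alpha}$ and $M_\infty\ge a$ this yields $\nu_j\ge(a\kappa_\alpha S(\alpha,n))^{n/2\alpha}$ for every $j$ with $\nu_j\neq0$, and feeding this back into $\mu_j\ge S(\alpha,n)\nu_j^{2/2^*_\alpha}$ gives the estimate I will need,
\[
a\kappa_\alpha\mu_j\ge\big(a\kappa_\alpha S(\alpha,n)\big)^{n/2\alpha},\qquad j\in J.
\]

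The decisive step is to exclude $J\neq\emptyset$. Letting $k\to\infty$ in the identity above gives
\[
c=\Big(\tfrac12-\tfrac1{2^*_\alpha}\Big)aL+\Big(\tfrac14-\tfrac1{2^*_\alpha}\Big)\varepsilon L^2-\lambda\Big(\tfrac1q-\tfrac1{2^*_\alpha}\Big)\int_{\Omega\times\{0\}}f(z)|u_0|^q\,dz,
\]
and tightness together with $\mu\ge y^{1-2\alpha}|\nabla w_0|^2+\sum_j\mu_j\delta_{x_j}$ gives $L\ge\|w_0\|^2+\kappa_\alpha\sum_{j\in J}\mu_j$. If some $\nu_j\neq0$, the last estimate makes the concentrated part of $aL$ contribute at least $(a\kappa_\alpha S(\alpha,n))^{n/2\alpha}$, so that $(\frac12-\frac1{2^*_\alpha})aL\ge(\frac12-\frac1{2^*_\alpha})a\|w_0\|^2+(\frac12-\frac1{2^*_\alpha})(a\kappa_\alpha S(\alpha,n))^{n/2\alpha}$. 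Discarding the nonnegative quartic term and absorbing the sublinear contribution, which is bounded in absolute value by $C\lambda\|w_0\|^q$ through Lemma~\ref{22traceemb}, into $(\frac12-\frac1{2^*_\alpha})a\|w_0\|^2$ by Young's inequality with conjugate exponents $\frac2q$ and $\frac2{2-q}$ (exactly the minimisation performed in Lemma~\ref{cbb}) leaves a remainder of size $-C'\lambda^{2/(2-q)}$, whence $c\ge c_*$. This contradicts the hypothesis $c<c_*$, so $J=\emptyset$ and $\nu=|u_0|^{2^*_\alpha}$.

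With $J=\emptyset$, tightness yields $\int|u_k|^{2^*_\alpha}\to\int|u_0|^{2^*_\alpha}$ and hence, by the Brezis--Lieb lemma, $u_k\to u_0$ in $L^{2^*_\alpha}(\Omega)$. Evaluating $\langle\mathcal{I}'_{M,\lambda}(w_k),w_k-w_0\rangle\to0$, the critical and subcritical integrals vanish by strong $L^{2^*_\alpha}$ and $L^q$ convergence, leaving $M(\|w_k\|^2)\langle w_k,w_k-w_0\rangle\to0$; since $a\le M(\|w_k\|^2)$ stays bounded and $\langle w_0,w_k-w_0\rangle\to0$ by weak convergence, $\|w_k-w_0\|^2=\langle w_k,w_k-w_0\rangle-\langle w_0,w_k-w_0\rangle\to0$, giving the strong convergence of $\{w_k\}$ and hence of $\{u_k\}$. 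The main obstacle throughout is the interaction of the nonlocal Kirchhoff coefficient with the localisation: one must pass the global scalar $M(\|w_k\|^2)$ through the atom analysis (done via $M\ge a$) and split $L$ into its regular and concentrated parts, so that the positive quartic term may be discarded and the indefinite sublinear term absorbed into the $\lambda^{2/(2-q)}$ correction appearing in $c_*$.
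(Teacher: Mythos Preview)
Your argument follows the same concentration--compactness strategy as the paper and is essentially correct, but there is one point where the two diverge and where your write-up needs an extra line. For the contradiction you form $\mathcal I_{M,\lambda}(w_k)-\tfrac{1}{2^*_\alpha}\langle\mathcal I'_{M,\lambda}(w_k),w_k\rangle$, which eliminates the critical integral and leaves the nonnegative quartic $(\tfrac14-\tfrac{1}{2^*_\alpha})\varepsilon L^2$ to discard; the paper instead forms $\mathcal I_{M,\lambda}(w_k)-\tfrac14\langle\mathcal I'_{M,\lambda}(w_k),w_k\rangle$, which kills the quartic Kirchhoff contribution exactly and keeps a positive piece $(\tfrac14-\tfrac{1}{2^*_\alpha})\int|u_k|^{2^*_\alpha}$ of the critical term (this is why both $\mu_{j_0}$ and $\nu_{j_0}$ appear in the paper's lower bound). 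The paper's choice is what manufactures \emph{precisely} the coefficients $\tfrac{4-q}{4q}$ and $\tfrac{2q}{a}$ in the displayed threshold: absorbing $\tfrac{a}{4}\|w_0\|^2$ against $\lambda(\tfrac1q-\tfrac14)\|f\|\,(\sqrt{\kappa_\alpha}S(\alpha,n))^{-q}\|w_0\|^q$ by Young lands on $c_*$ on the nose. Your route produces instead the pair $(\tfrac12-\tfrac{1}{2^*_\alpha})a$ and $\lambda(\tfrac1q-\tfrac{1}{2^*_\alpha})$, so ``whence $c\ge c_*$'' is not automatic---you must check that your Young penalty is no larger than the one in $c_*$. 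This is true because $g(s)=(\tfrac1q-s)^2/(\tfrac12-s)^q$ is increasing on $(0,\tfrac12)$ (indeed $g'(s)/g(s)=(2-q)s/[(\tfrac1q-s)(\tfrac12-s)]>0$) and $1/2^*_\alpha<1/4$, but you should say so. Your final paragraph, upgrading $L^{2^*_\alpha}$-convergence of the traces to strong convergence in $H^1_{0,L}(\mathcal C)$ via $\langle\mathcal I'_{M,\lambda}(w_k),w_k-w_0\rangle\to0$, is a step the paper leaves implicit and is a welcome addition.
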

\begin{proof}
Let $\{w_k\}$ be a $(PS)_c$ sequence for $\mc I_{M,\lambda}$ in $H^1_{0,L}(\mathcal C)$ then it is easy to
 see that $\{w_k\}$ is bounded in $H^1_{0,L}(\mathcal C)$. Therefore there exists
 $w_0\in H^1_{0,L}(\mathcal C)$ such that $w_k\rightharpoonup w_0$ weakly in $H^1_{0,L}(\mathcal C)$, $ w_k(z,0)\rightarrow w_0(z,0)$
 in $L^\gamma(\Omega)$ for $\gamma\in [2, 2^*_\alpha)$ and $w_k\rightarrow w_0$ pointwise almost everywhere in $\Omega\times\{0\}$. Now from Theorem \ref{lions}, there exists two positive measures $\mu$ and $\nu$ on $\mathcal C$ such that
\begin{equation*}
y^{1-2\alpha}|\nabla w_k|^2  \rightarrow \mu\quad\mbox{and}\quad\left|w_k\right|^{2^*_\alpha}\rightarrow \nu,
\end{equation*}
Moreover, we have a countable index set $J$, positive constants $\{\nu_j\}_{j\in J}$ and $\{\mu_j\}_{j\in J}$ such that
\begin{equation*}
\nu=\left|w_0\right|^{2^*_\alpha}+\sum_{j\in J} \nu_j\delta_{z_j}, \;\; \; \text{and}\;\mu\geq y^{1-2\alpha}|\nabla w_0|^2 +\sum_{j\in J} \mu_j\delta_{z_j},\qquad \mu_j\geq S(\alpha, n)\nu^{\frac{2}{2^*_\alpha}}_{j}.
\end{equation*}
Our goal is to show that $J$ is empty. Suppose not then for any $j\in J$ we can consider the cut-off functions, $\psi_{\epsilon, j}(z)$, centered at $z_j$ such that $0\leq \psi_{\epsilon, j}(z)\leq 1$, $\psi_{\epsilon, j}(z)=1$ in $B_{\frac{\epsilon}{2}}(z_j)$, $\psi_{\epsilon, j}(z)=0$ in $B^c_{\epsilon}(z_j)$,  and $|\nabla \psi_{\epsilon, j}(z)|\leq {C}/{\epsilon}$. Then we have
\begin{equation}\label{flim}
\displaystyle \lim_{\epsilon\rightarrow 0}\lim_{k\rightarrow\infty}\int_{\Omega\times\{0\}}f(z)|w_k(z,0)|^{q-1}\psi_{\epsilon,j}w_k(z,0)dz=0.
\end{equation}
Now using \eqref{flim} and boundedness of the sequence $\{w_k\}$,
\begin{align*}
0&=\displaystyle\lim_{\epsilon\rightarrow 0}\displaystyle \lim_{k\rightarrow \infty} \langle \mc I^\prime_{M,\lambda}(w_k), \psi_{\epsilon,j}w_k\rangle\\
&=\displaystyle\lim_{\epsilon \rightarrow 0}\displaystyle \lim_{k\rightarrow \infty}\left\{(a+\epsilon \|w_k\|^2)\kappa_\alpha\int_{\mathcal C}y^{1-2\alpha}\nabla w_k \nabla (\psi_{\epsilon, j}w_k)dzdy-\int_{\Omega\times\{0\}}|w_k(z,0)|^{2^*_\alpha}\psi_{\epsilon,j}(z,0)dz\right\}
\end{align*}
which implies
\begin{align*}
0&=\displaystyle\lim_{\epsilon \rightarrow 0}\displaystyle \lim_{k\rightarrow \infty}\left\{a \kappa_\alpha\int_{\mathcal C}y^{1-2\alpha}|\nabla w_k|^2 \psi_{\epsilon, j}dzdy+a \kappa_\alpha\int_{\mathcal C}y^{1-2\alpha}w_k\nabla w_k \nabla \psi_{\epsilon, j}dzdy\right.\\&+\epsilon \|w_k\|^2 \kappa_\alpha\int_{\mathcal C}y^{1-2\alpha}|\nabla w_k |^2 \psi_{\epsilon, j}dzdy+
\epsilon \|w_k\|^2 \kappa_\alpha\int_{\mathcal C}y^{1-2\alpha}w_k \nabla w_k \nabla \psi_{\epsilon, j}dzdy
\\ &-\left.\int_{\Omega\times\{0\}}|w_k(z,0)|^{2^*_\alpha}\psi_{\epsilon,j}(z,0)dz\right\}.\\
\end{align*}
Now using
\begin{eqnarray*}
0&\leq&\displaystyle \lim_{k\to\infty} \left|\int_{\mathcal{C}_{\Omega}}{y^{1-2\alpha}w_k\nabla w_{k}\nabla \psi_{\epsilon, j} dzdy} \right|\\
&\leq &\displaystyle \lim_{k\to
\infty}\left(\int_{\mathcal{C}_{\Omega}}y^{1-2\alpha}|\nabla w_{k}|^{2}dzdy\right)^{1/2}
\left(\int_{B_{\epsilon}(z_j)}{y^{1-2\alpha}|\nabla
\psi_{\epsilon, j}|^{2}|w_{k}(z,0)|^{2}dzdy}\right)^{1/2}\longrightarrow0,
\end{eqnarray*}
 we get
 \begin{align*}
0&\geq\displaystyle\lim_{\epsilon \rightarrow 0}\displaystyle \lim_{k\rightarrow \infty}\left\{a \kappa_\alpha\int_{\mathcal C} \psi_{\epsilon, j}d\mu+a \kappa_\alpha\int_{\mathcal C}y^{1-2\alpha}w_k\nabla w_k \nabla \psi_{\epsilon, j}dzdy\right.\\&+\epsilon \|w_k\|^2 \kappa_\alpha\int_{\mathcal C}\psi_{\epsilon, j}d\mu+
\epsilon \|w_k\|^2\kappa_\alpha\int_{\mathcal C}y^{1-2\alpha}w_k \nabla w_k \nabla \psi_{\epsilon, j}dzdy
-\left.\int_{\Omega\times\{0\}}\psi_{\epsilon,j}d\nu\right\}\\
&
\geq a\kappa_\alpha\mu_j-\nu_j.
\end{align*}
From the relation $\mu_j\geq S(\alpha, n)\nu^{\frac{2}{2^*_\alpha}}_{j}$ implies $\mu_j\geq \left(a^2 \kappa^2_\alpha S(\alpha, n)^{2^*_\alpha}\right)^\frac{1}{2^*_\alpha-2}$ or $\mu_j=0$. We claim that
$$\mu_j\geq \left(a^2 \kappa^2_\alpha S(\alpha, n)^{2^*_\alpha}\right)^\frac{1}{2^*_\alpha-2}$$
 is not possible to hold. We prove by contradiction. Suppose
 \begin{align}\label{lereq}
 \mu_j\geq \left(a^2 \kappa^2_\alpha S(\alpha, n)^{2^*_\alpha}\right)^\frac{1}{2^*_\alpha-2}.
\end{align}
Then
\begin{align*}
c&=\displaystyle\lim_{k\rightarrow \infty} \left\{\mc I_{M,\lambda}(w_k)-\frac{1}{4}\langle \mc I_{M,\lambda}^{\prime}(w_k), w_k\rangle\right\}\\
&\geq \frac{1}{4}a\left(\|w_0\|^2+\kappa_\alpha\sum_{j\in J}\mu_j\delta_{z_j}\right)+ \left(\frac{1}{4}-\frac{1}{2^*_\alpha}\right)\left(\|w_0\|_{2^*_\alpha}^{2^*_\alpha}+\sum_{j\in J}\nu_j\delta_{z_j}\right)\\&-\lambda \left(\frac{1}{q}-\frac{1}{4}\right)\int_{\Omega \times \{0\}}f(z)|w_0(z,0)|^qdz.\\
\end{align*}
Using \eqref{lereq}, we have
\begin{align*}
c&\geq \frac14 a\kappa_\alpha\mu_{j_0}+\left(\frac{1}{4}-\frac{1}{2^*_\alpha}\right)\nu_{j_0}+\frac{1}{4}a\|w_0\|^2-\lambda \left(\frac{1}{q}-\frac{1}{4}\right)\|f\|_{L^{\frac{2^*_\alpha}{2^*_\alpha-q}}}(\sqrt{\kappa_\alpha}S(\alpha, n))^{-q}\|w_0\|^q
\\&\geq \left(\frac{1}{2}-\frac{1}{2^*_\alpha}\right)(a\kappa_\alpha S(\alpha, n))^\frac{2^*_\alpha}{2^*_\alpha-2}-\lambda^\frac{2}{2-q}\left(\frac{(4-q)\|f\|_{\frac{2^*_\alpha}{2^*_\alpha-q}}
(\sqrt{\kappa_\alpha}S(\alpha, n))^{-q}}{4q}\right)^\frac{2}{2-q} \left(\frac{2-q}{2}\right)\left(\frac{2q}{a}\right)^\frac{q}{2-q},
\end{align*}
which is a contradiction. Hence $J$
is empty and $$\ds \int_{\Omega\times\{0\}}|w_k(z,0)|^{2^*_\alpha} dz\rightarrow \ds \int_{\Omega\times\{0\}}|w_0(z,0)|^{2^*_\alpha} dz.$$

\end{proof}

\noindent \textbf{Proof of Theorem \ref{22mht1} (i)} Assume $\lambda_0=\min\{\lambda_1, \lambda_2, \lambda_3\}$. Now as the functional is bounded below in $\mathcal N_\lambda$, we minimize the functional $\mc I_{M, \lambda}$
 in $\mathcal N_\lambda$ and using Proposition \ref{prp1}, Lemma \ref{3a} and Proposition \ref{crcmp}, we get the minimizer $w_0$ of  $\mc I_{M, \lambda}$ in $\mc N_{\lambda}$ for $\lambda\in(0, \lambda_0)$ with $\mc I_{M, \lambda}(w_0)<0$. Now we claim that $w_0 \in \mathcal N_\lambda^+$ for $\lambda\in (0,\lambda_0)$.
 If not then $w_0\in \mc N_\lambda^-$. Note that using $w_0\in \mc N_\lambda$ and $\mc I_{M, \lambda}(w_0)<0$ we get $w_0 \in H^+$. Therefore from Lemma \ref{L37}, we get $t^-(w_0)>t^+(w_0)>0$ such that $t^-w_0\in \mc N_\lambda^-$ and $t^+w_0\in \mc N_\lambda^+$ which implies $t^-=1$ and $t^+<1$. Therefore we can find $t_0\in (t^+, t^-)$ such that
\[
 \mathcal I_{M,\lambda}(t^+w_0)=\displaystyle\min_{0\leq t\leq t^-} \mathcal I_{M,\lambda}(tw_0)<\mathcal I_{M,\lambda}(t_0w_0)\leq \mathcal I_{M,\lambda}(t^-w_0)=\mathcal I_{M,\lambda}(w_0)=\theta_\lambda^+
 \]
 which is a contradiction. Hence $w_0\in \mc N_\lambda^+.$ Since $\mc I_{M, \lambda}(w)=\mc I_{M, \lambda}(|w|)$, we can assume that $w_0\geq0$. Now using the fact that $M(t)>a$ and strong maximum principle (see \cite{MR2270163}),  we get $w_0>0$.
 \noindent Now the following Lemma shows that $w_0$ is indeed a local minimizer of $\mc I_{M,\lambda}$ in $H^1_{0,L}(\mathcal C)$.
 \begin{lem}
The function $w_0 \in \mathcal N_\lambda^+$ is a local minimum of $\mathcal I_{M,\lambda}(w)$ in $H^1_{0, L}(\mathcal C)$ for $\lambda<\lambda_0$.
\end{lem}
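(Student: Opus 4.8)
The plan is to show that $w_0$, which we already know is a global minimizer of $\mathcal I_{M,\lambda}$ on the constraint set $\mathcal N_\lambda$ with $w_0\in\mathcal N_\lambda^+$, is in fact a local minimizer in the whole space $H^1_{0,L}(\mathcal C)$ without constraint. The standard strategy for sublinear-plus-critical problems is to argue by contradiction: if $w_0$ were not a local minimum, there would be a sequence $u_k\to w_0$ in $H^1_{0,L}(\mathcal C)$ with $\mathcal I_{M,\lambda}(u_k)<\mathcal I_{M,\lambda}(w_0)$. For each $u_k$ I would use the fiber-map analysis of Lemma \ref{L37} to project $u_k$ onto $\mathcal N_\lambda^+$ by a scalar $t_k^+=t^+(u_k)$, so that $t_k^+u_k\in\mathcal N_\lambda^+$, and then compare energies.

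\textbf{Key steps.} First I would recall that $w_0\in\mathcal N_\lambda^+$ means $\Phi_{w_0}''(1)>0$, so $t=1$ is a strict local minimum of the fiber map $t\mapsto\Phi_{w_0}(t)=\mathcal I_{M,\lambda}(tw_0)$; combined with Lemma \ref{L37}(i) this says $t^+(w_0)=1$ and that $1<t_{\max}(w_0)<t^-(w_0)$. The crucial continuity input is that the map $u\mapsto t^+(u)$ is continuous at $w_0$: this follows because $t^+$ is characterized through the implicit equation $\psi_u(t)=\lambda\int_{\Omega\times\{0\}}f(z)|u(z,0)|^q\,dz$ together with the sign condition $\psi_u'(t^+)>0$, and the coefficients in \eqref{siutd} depend continuously on $u$ via the trace embedding of Lemma \ref{22traceemb}. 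Hence $t_k^+=t^+(u_k)\to t^+(w_0)=1$ as $u_k\to w_0$. Then, since $t_k^+u_k\in\mathcal N_\lambda^+$ and $\theta_\lambda^+$ is the infimum of $\mathcal I_{M,\lambda}$ over $\mathcal N_\lambda^+$,
\[
\mathcal I_{M,\lambda}(w_0)=\theta_\lambda^+\le\mathcal I_{M,\lambda}(t_k^+u_k).
\]
Now I would exploit that $t^+(u_k)$ realizes the minimum $\min_{0\le t\le t^-(u_k)}\mathcal I_{M,\lambda}(tu_k)$, guaranteed by Lemma \ref{L37}(i), so in particular $\mathcal I_{M,\lambda}(t_k^+u_k)\le\mathcal I_{M,\lambda}(tu_k)$ for all admissible $t$; taking $t=1$ (valid for large $k$ because $t^-(u_k)\to t^-(w_0)>1$, so $1<t^-(u_k)$ eventually) gives $\mathcal I_{M,\lambda}(t_k^+u_k)\le\mathcal I_{M,\lambda}(u_k)$.

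\textbf{Closing the argument.} Chaining the two displays yields $\mathcal I_{M,\lambda}(w_0)\le\mathcal I_{M,\lambda}(u_k)$ for all large $k$, directly contradicting the assumed strict inequality $\mathcal I_{M,\lambda}(u_k)<\mathcal I_{M,\lambda}(w_0)$. Therefore no such sequence exists and $w_0$ is a local minimizer of $\mathcal I_{M,\lambda}$ in $H^1_{0,L}(\mathcal C)$.

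\textbf{The main obstacle} I anticipate is establishing the continuity $t^+(u_k)\to 1$ rigorously and, tied to it, verifying that $1$ lies in the interval $(0,t^-(u_k)]$ for $k$ large so that the minimizing property of $t^+$ can legitimately be evaluated at $t=1$. This requires that $u_k\in H^+$ for large $k$ (which holds since $w_0\in H^+$ is an open condition under the trace embedding) and the uniform nondegeneracy $\Phi_{w_0}''(1)>0$ carried over to nearby fibers, guaranteeing the local uniqueness and continuous dependence of $t^+$. The Kirchhoff term $\varepsilon\|u\|^4$ only strengthens the coercivity of the fiber maps and does not interfere with these monotonicity arguments, so once the continuity of the projection is secured the remainder is a routine energy comparison.
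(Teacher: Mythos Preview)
Your proposal is correct and follows essentially the same strategy as the paper: project nearby points onto $\mathcal N_\lambda^+$ via the fiber map, use that $w_0$ realizes $\theta_\lambda^+$, and then invoke the minimizing property $\mathcal I_{M,\lambda}(t^+u)=\min_{0\le t\le t^-}\mathcal I_{M,\lambda}(tu)$ at $t=1$. The only cosmetic differences are that you frame it as a contradiction with a sequence while the paper argues directly on a small ball, and that the paper obtains the continuity/differentiability of the projection $w\mapsto t^+(w_0-w)$ by quoting the implicit-function-theorem Lemma~\ref{zii} (together with continuity of $t_{\max}$ to identify the branch), whereas you sketch continuity of $t^+$ and $t^-$ directly from \eqref{siutd}; both routes are valid and yield the same inequality chain $\mathcal I_{M,\lambda}(w_0)\le\mathcal I_{M,\lambda}(t^+(u)u)\le\mathcal I_{M,\lambda}(u)$.
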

\begin{proof}
Since $w_0 \in \mathcal N_\lambda^{+}$, we have $t^+(w_0)=1
<t_*(w_0)$. Hence by continuity of $w\mapsto t_*(w)$, given
$\epsilon>0$, there exists $\delta=\delta(\epsilon)>0$ such that $1+\epsilon< t_*(w_0-w)$
for all $\|w\|<\delta$. Also, from Lemma \ref{zii}, for $\delta>0$
small enough, we obtain a $C^1$ map $t: \mc {B}(0,\delta)\rightarrow \mathbb R^+$
such that $t(w)(w_0-w)\in  \mathcal N_\lambda$, $t(0)=1$. Therefore, for
$\delta>0$ small enough we have $t^+(w_0-w)=
t(w)<1+\epsilon<t_*(w_0-w)$ for all $\|w\|<\delta$. Since $t_*(w_0-w)>1$,
we obtain $\mathcal I_{M,\lambda}(w_0)\leq \mathcal I_{M,\lambda}(t_{1}(w_0-w)(w_0-w))\leq \mathcal I_{M,\lambda}(w_0-w)$
for all $\|w\|<\delta$. This shows that $w_0$ is a local minimizer for
$\mathcal I_{M,\lambda}$ in $H^1_{0, L}(\mathcal C)$.
\end{proof}
\section{Existence of second solution in \texorpdfstring{$\mathcal N_\lambda^-$}{sclg}}
\noi Now we show the existence of second solution in $\mathcal N_\lambda^-$. The following Lemma gives the critical level to show the second solution
by considering the mountain pass structure around first solution.\\
\noindent
Let $\sum=\{z\in \Omega\;|\;f(z)>0\}$ be an open set with positive measure. Consider the test functions as
$\eta \in C_c^{\infty}(\mathcal C_{\sum})$, where $\mathcal C_{\sum}=\sum\times (0, \infty)$ such that $0\leq \eta(z,y)\leq 1$
in $\mathcal C_{\sum}$ and $(supp f^+\times \{y>0\})\cap\{(z,y)\in \mathcal C_{\sum}: \eta=1\}\neq \emptyset$.
Moreover, for $\rho>0$ small, $\eta(z,y)=1$ on $\mc B_{\rho}(0)$ and $\eta(z,y)=0$ on $\mc B^c_{2\rho}(0)$. We take $\rho$ small enough such that $\mc B_{2\rho}(0)\subset \mathcal C_{\sum}$. Consider $w_{\epsilon,\eta}=\eta w_\epsilon\in H^1_{0,L}(\mathcal C)$, where $w_\epsilon$ is defined as in \eqref{extrml}. Then for $\lambda\in(0, \lambda_0)$, we have the following lemma.
\begin{lem}\label{II}
Let $w_{0}$ be the local minimum for the functional $\mc I_{M,\lambda}$ in $H^1_{0,L}(\mc C)$. Then for every $r>0$ and a.e. $\eta \in \sum$\;
there exists $\epsilon_{0} = \epsilon_{0}(r, \eta) > 0$ s.t.
\begin{equation*}
\mathcal I_{M,\lambda}(w_{0}+r\;w_{\epsilon,\eta}) <c_0, \;\;\textrm{for}\;\;\epsilon \in (0,\epsilon_{0}),
\end{equation*}
\textrm{where}
\[
c_0= \left(\frac{1}{2}-\frac{1}{2^*_\alpha}\right)(a\kappa_\alpha S(\alpha, n))^\frac{2^*_\alpha}{2^*_\alpha-2}-\lambda^{\frac{2}{2-q}}\left(\frac{(4-q)\|f\|_{\frac{2^*_\alpha}{2^*_\alpha-q}}
(\sqrt{\kappa_\alpha}S(\alpha, n))^{-q}}{4q}\right)^\frac{2}{2-q} \left(\frac{2-q}{2}\right)\left(\frac{2q}{a}\right)^\frac{q}{2-q}.
\]
\end{lem}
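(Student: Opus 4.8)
The plan is to pass to the limit $\epsilon\to 0^{+}$ for the fixed parameter $r$ and thereby reduce the whole statement to an elementary one–variable maximization. Since $\epsilon\mapsto\mathcal{I}_{M,\lambda}(w_{0}+r\,w_{\epsilon,\eta})$ is continuous, it is enough to show that the limit profile
\[
\mathcal{I}^{\infty}(r):=\lim_{\epsilon\to 0^{+}}\mathcal{I}_{M,\lambda}(w_{0}+r\,w_{\epsilon,\eta})
\]
is \emph{strictly} below $c_{0}$; the required $\epsilon_{0}=\epsilon_{0}(r,\eta)$ is then delivered by the very definition of the limit. First I would expand, using $M(t)=a+\varepsilon t$ so that $\frac12\widehat M(\|w\|^{2})=\frac{a}{2}\|w\|^{2}+\frac{\varepsilon}{4}\|w\|^{4}$, writing $\mathcal{I}_{M,\lambda}(w_{0}+r\,w_{\epsilon,\eta})$ as the sum of the quadratic and quartic norm terms, the sublinear term $-\frac{\lambda}{q}\int_{\Omega\times\{0\}}f\,|w_{0}+r\,w_{\epsilon,\eta}|^{q}$ and the critical term $-\frac{1}{2^{*}_{\alpha}}\int_{\Omega\times\{0\}}|w_{0}+r\,w_{\epsilon,\eta}|^{2^{*}_{\alpha}}$. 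Centering the bubble at an almost–every point $\eta$ of $\Sigma=\{f>0\}$ (a positivity/Lebesgue point of the continuous weight $f$) is what guarantees that the test family stays in $H^{+}$ and that the sublinear interaction is favorable; this is the meaning of the clause ``a.e.\ $\eta\in\Sigma$''.

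Next I would compute the limit term by term. Because $w_{\epsilon,\eta}$ concentrates at the chosen point, $w_{\epsilon,\eta}\rightharpoonup 0$ in $H^{1}_{0,L}(\mc C)$; hence $\langle w_{0},w_{\epsilon,\eta}\rangle\to 0$ and $\|w_{0}+r\,w_{\epsilon,\eta}\|^{2}\to\|w_{0}\|^{2}+r^{2}T_{0}$ with $T_{0}=\lim_{\epsilon\to0}\|w_{\epsilon,\eta}\|^{2}$. Since $1<q<2<2^{*}_{\alpha}$, the trace of $w_{\epsilon,\eta}$ tends to $0$ in $L^{q}(\Omega)$, so $\int f\,|w_{0}+r\,w_{\epsilon,\eta}|^{q}\to\int f\,|w_{0}|^{q}$; and the Brezis–Lieb lemma (applicable since $w_{\epsilon,\eta}(\cdot,0)\to0$ a.e.) gives $\int|w_{0}+r\,w_{\epsilon,\eta}|^{2^{*}_{\alpha}}\to\int|w_{0}|^{2^{*}_{\alpha}}+r^{2^{*}_{\alpha}}U_{0}$, $U_{0}=\lim_{\epsilon\to0}\int|w_{\epsilon,\eta}|^{2^{*}_{\alpha}}$, the extremal \eqref{extrml} being normalized so that $T_{0}/U_{0}^{\,2/2^{*}_{\alpha}}=\kappa_{\alpha}S(\alpha,n)$. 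Substituting and subtracting $\mathcal{I}_{M,\lambda}(w_{0})=\theta_{\lambda}^{+}$, the $w_{0}$–only contributions collapse into $\theta_{\lambda}^{+}$ and, after collecting the quartic Kirchhoff pieces, I am left with
\[
\mathcal{I}^{\infty}(r)=\theta_{\lambda}^{+}+G(r),\qquad G(r)=\frac{M(\|w_{0}\|^{2})}{2}\,r^{2}T_{0}+\frac{\varepsilon}{4}\,r^{4}T_{0}^{2}-\frac{1}{2^{*}_{\alpha}}\,r^{2^{*}_{\alpha}}U_{0},
\]
where $M(\|w_{0}\|^{2})=a+\varepsilon\|w_{0}\|^{2}$.

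Everything now reduces to bounding $\sup_{r\ge0}G(r)$. Here the hypothesis $2\alpha<n<4\alpha$, equivalent to $2^{*}_{\alpha}>4$, is decisive: it lets the critical power $r^{2^{*}_{\alpha}}$ dominate the quartic Kirchhoff power $r^{4}$, so $G(r)\to-\infty$ and the supremum is a maximum attained at some $r_{*}$ in a compact interval that stays bounded as the Kirchhoff coefficient $\varepsilon\to0$. Discarding the nonnegative quartic term, the maximum of $r\mapsto\frac{M(\|w_{0}\|^{2})}{2}r^{2}T_{0}-\frac{1}{2^{*}_{\alpha}}r^{2^{*}_{\alpha}}U_{0}$ equals $\left(\frac12-\frac{1}{2^{*}_{\alpha}}\right)\left(M(\|w_{0}\|^{2})\,\kappa_{\alpha}S(\alpha,n)\right)^{2^{*}_{\alpha}/(2^{*}_{\alpha}-2)}$ by the standard scalar computation together with $T_{0}/U_{0}^{\,2/2^{*}_{\alpha}}=\kappa_{\alpha}S(\alpha,n)$. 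As $M(\|w_{0}\|^{2})=a+\varepsilon\|w_{0}\|^{2}$ and $\|w_{0}\|$ is bounded by coercivity (Lemma \ref{cbb}), this overshoots the constant $\left(\frac12-\frac{1}{2^{*}_{\alpha}}\right)(a\kappa_{\alpha}S(\alpha,n))^{2^{*}_{\alpha}/(2^{*}_{\alpha}-2)}$ appearing in $c_{0}$ by only $O(\varepsilon)$, and the retained quartic term adds at most a further $O(\varepsilon)$ over the compact range of $r_{*}$. Thus
\[
\sup_{r\ge0}\mathcal{I}^{\infty}(r)\le\theta_{\lambda}^{+}+\left(\frac12-\frac{1}{2^{*}_{\alpha}}\right)(a\kappa_{\alpha}S(\alpha,n))^{\frac{2^{*}_{\alpha}}{2^{*}_{\alpha}-2}}+O(\varepsilon),
\]
and I would finish by invoking Lemma \ref{3a} (with the lower bound of Lemma \ref{cbb}), which exhibits the first solution at a level $\theta_{\lambda}^{+}\le-c\,\lambda^{2/(2-q)}$; comparing with $c_{0}=\left(\frac12-\frac{1}{2^{*}_{\alpha}}\right)(a\kappa_{\alpha}S(\alpha,n))^{2^{*}_{\alpha}/(2^{*}_{\alpha}-2)}-K\lambda^{2/(2-q)}$ leaves the strict inequality $\sup_{r}\mathcal{I}^{\infty}<c_{0}$ once $\varepsilon$ is small enough to swallow the $O(\varepsilon)$ error inside the margin $(c-K)\lambda^{2/(2-q)}$.

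The genuinely delicate point — and what is \emph{new} compared with the case $M\equiv1$ in \cite{MC,MR2373222} — is the quartic Kirchhoff term $\frac{\varepsilon}{4}\|w\|^{4}$. It raises the energy in two ways, through the shift $M(\|w_{0}\|^{2})>a$ in the leading Sobolev constant and through $\frac{\varepsilon}{4}r^{4}T_{0}^{2}$, so the whole estimate closes only because $\varepsilon$ is a standing small parameter and because $2^{*}_{\alpha}>4$ confines $r_{*}$ to a compact set; this is precisely where $n<4\alpha$ is used. The second subtlety is the quantitative bookkeeping of the $\lambda^{2/(2-q)}$ margin: one must check that the negative level of the first solution strictly outweighs the constant $K$ subtracted in $c_{0}$, which is exactly the Young–type balancing already built into the definition of $c_{0}$ in Proposition \ref{crcmp}. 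Should that soft comparison prove too lossy, the fallback is to keep the $\epsilon$–dependence explicit and use the sharp decay estimates $\|w_{\epsilon,\eta}\|^{2}=\kappa_{\alpha}S(\alpha,n)^{n/(2\alpha)}+O(\epsilon^{\,n-2\alpha})$ and $\int|w_{\epsilon,\eta}|^{2^{*}_{\alpha}}=S(\alpha,n)^{n/(2\alpha)}+O(\epsilon^{\,n})$ of \cite{MR2911424,MR3117361}, harvesting from the interaction terms the additional negative powers of $\epsilon$; the range $2\alpha<n<4\alpha$ guarantees those error exponents carry the favorable sign.
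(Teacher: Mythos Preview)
Your limit approach has a genuine gap at the very last step. After reducing to
\[
\sup_{r\ge0}\mathcal{I}^{\infty}(r)\le\theta_{\lambda}^{+}+\left(\tfrac12-\tfrac{1}{2^{*}_{\alpha}}\right)(a\kappa_{\alpha}S(\alpha,n))^{\frac{2^{*}_{\alpha}}{2^{*}_{\alpha}-2}}+O(\varepsilon),
\]
you need $\theta_{\lambda}^{+}+O(\varepsilon)< -K\lambda^{2/(2-q)}$ with the \emph{specific} constant $K$ appearing in $c_{0}$. Lemma~\ref{3a} does not give this: it only yields $\theta_{\lambda}^{+}<-\bigl(\tfrac12-\tfrac{1}{2^{*}_{\alpha}}\bigr)\tfrac{2-q}{q}\,a\,\|t_{\lambda}v_{\lambda}\|^{2}$, and while one can check $\|t_{\lambda}v_{\lambda}\|^{2}\sim\lambda^{2/(2-q)}$ for small $\lambda$, the resulting constant depends on the arbitrary test function $v_{\lambda}$ and there is no mechanism forcing it to exceed $K$. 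Taking the limit in the concentration parameter throws away exactly the term that would have closed this gap.

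The paper does not attempt this comparison of constants at all. Instead it expands $|w_{0}+r\,w_{\epsilon,\eta}|^{2^{*}_{\alpha}}$ one order further, uses that $w_{0}$ is a critical point to cancel the linear cross terms, and retains the \emph{interaction} contribution $-C\,r^{2^{*}_{\alpha}-1}\int_{\Omega\times\{0\}}|w_{\epsilon,\eta}|^{2^{*}_{\alpha}-1}$ coming from $w_{0}>0$. By the bubble estimates this term is of order $-C\epsilon^{(n-2\alpha)/2}$. The paper then couples the concentration parameter to $\lambda$ via $\epsilon=\lambda^{2/(2-q)}$; since $n<4\alpha$ and $\alpha<1$ force $(n-2\alpha)/2<1$, the negative term $-C\lambda^{(n-2\alpha)/(2-q)}$ strictly dominates both the Kirchhoff error $C_{2}\lambda^{2/(2-q)}$ and the subtracted constant $K\lambda^{2/(2-q)}$ in $c_{0}$ once $\lambda$ is small. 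Only the qualitative fact $\mathcal{I}_{M,\lambda}(w_{0})<0$ is then needed. Your ``fallback'' paragraph gestures at this but does not identify the decisive interaction term $\int w_{0}|w_{\epsilon,\eta}|^{2^{*}_{\alpha}-1}$ or the coupling $\epsilon=\lambda^{2/(2-q)}$, which are the heart of the argument.
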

\begin{proof}
 From \eqref{fel},
\begin{align*}
\mathcal I_{M,\lambda}(w_{0} + r\;w_{\epsilon,\eta})&=\frac{a}{2}\| w_{0} + r\;w_{\epsilon,\eta}\|^{2} + \frac{\epsilon}{4}\| w_{0} + r\;w_{\epsilon,\eta}\|^{4}- \frac{\lambda}{q}\int_{\Omega\times\{0\}} f(z)|w_{0}+ r\;w_{\epsilon, \eta}|^{q}dz \\
&- \frac{1}{2^*_\alpha}\int_{\Omega\times\{0\}} |w_{0} + r\;w_{\epsilon,\eta}|^{{2^*_\alpha}}dz\\
&= \frac{a}{2}\| w_{0}\|^{2} + \frac{a}{2}r^{2}\|w_{\epsilon, \eta}\|^{2} + a\;r \langle w_{0}, w_{\epsilon,\eta}\rangle + \frac{\epsilon}{4}\| w_{0}\|^{4}
 + \frac{\epsilon}{4} r^{4}\|w_{\epsilon, \eta}\|^{4}\\
 & + \;\epsilon r^{2} \langle w_{0}, w_{\epsilon,\eta}\rangle ^{2}
 + \frac{\epsilon}{2} r^{2}\|w_{0}\|^{2}\|w_{\epsilon, \eta}\|^{2} + \epsilon r^{3} \|w_{\epsilon, \eta}\|^{2} \langle w_{0}, w_{\epsilon, \eta}\rangle +\; \epsilon r \|w_{0}\|^{2}
 \langle w_{0}, w_{\epsilon, \eta}\rangle\\ &- \frac{\lambda}{q} \int_{\Omega\times\{0\}} f(z) |(w_{0} +  r w_{\epsilon, \eta})(z,0)|^{q}dz - \frac{1}{{2^*_\alpha}} \int_{\Omega\times\{0\}} |(w_{0}+rw_{\epsilon, \eta})(z,0)|^{{2^*_\alpha}}dz.
 \end{align*}
 Using the fact that $w_{0}$ is a solution of problem \eqref{E}, we get
 \begin{align*}
 \mathcal I_{M,\lambda}(w_{0} + r\;w_{\epsilon,\eta})&\leq  \mc I_{M,\lambda}(w_0) + \frac{a}{2}r^{2}\|w_{\epsilon, \eta}\|^2 + \frac{\epsilon}{4}r^{4}\|w_{\epsilon, \eta}\|^{4} + \epsilon\;r^{2}\| w_{0}\|^{2}\|w_{\epsilon, \eta}\|^2 +\frac{\epsilon}{2}r^{2}\| w_{0}\|^{2}\|w_{\epsilon, \eta}\|^2\\
 &+ \epsilon\;r^{3} \|w_{\epsilon, \eta}\|^{3}\| w_{0}\| -\frac{\lambda}{q} \left(\int_{\Omega\times\{0\}} f(z)( |w_{0} + r\;w_{\epsilon,\eta}|^{q}-|w_0|^q-qr|w_{0}|^{q-1} w_{\epsilon,\eta} )(z,0)dz\right)
 \\&- \frac{1}{{2^*_\alpha}}\left(\int_{\Omega\times\{0\}} (|w_0+rw_{\epsilon,\eta}|^{2^*_\alpha}-|w_0|^{2^*_\alpha}-{2^*_\alpha}rw_0^{2^*_\alpha-1} w_{\epsilon,\eta})(z,0) dz\right).
 \end{align*}
 Let $f(x)>0$ on $\sum$ and  $\|w_0\|=R$ together with Young's inequality, we get
\begin{align*}
 \mathcal I_{M,\lambda}(w_{0} + r\;w_{\epsilon,\eta})&\leq  \mc I_{M,\lambda}(w_0)+\frac{a}{2}r^{2}\|w_{\epsilon, \eta}\|^2+\frac{\epsilon}{4}r^{4}\|w_{\epsilon, \eta}\|^{4}+\frac{3\epsilon}{2}r^{2}R^{2}\|w_{\epsilon, \eta}\|^2+ \epsilon R\;r^{3} \|w_{\epsilon, \eta}\|^{3}\\&-\frac{1}{{2^*_\alpha}}r^{2^*_\alpha}\int_{\Omega\times\{0\}}|w_{\epsilon,\eta}(z,0)|^{2^*_\alpha}dz-C r^{2^*_\alpha-1}\int_{\Omega\times\{0\}} |w_{\epsilon,\eta}(z,0)|^{2^*_\alpha-1}dz.\\
 &\leq\mc I_{M,\lambda}(w_0)+\frac{a}{2}r^{2}\|w_{\epsilon, \eta}\|^2+\frac{5\epsilon}{4}r^{4}\|w_{\epsilon, \eta}\|^{4}-\frac{1}{{2^*_\alpha}}r^{2^*_\alpha}\int_{\Omega\times\{0\}}|w_{\epsilon,\eta}(z,0)|^{2^*_\alpha}dz\\
 &-C r^{2^*_\alpha-1}\int_{\Omega\times\{0\}} |w_{\epsilon,\eta}(z,0)|^{2^*_\alpha-1}dz+\frac{5\epsilon}{2}R^4.
 \end{align*}
 Now assume
 \begin{align*}
 g(t)=\frac{a}{2}t^{2}\|w_{\epsilon, \eta}\|^2+\frac{5\epsilon}{4}t^{4}\|w_{\epsilon, \eta}\|^{4}-
 C t^{2^*_\alpha-1}\ds \int_{\Omega\times\{0\}} |w_{\epsilon,\eta}(z,0)|^{2^*_\alpha-1}dz-\frac{1}{{2^*_\alpha}}t^{2^*_\alpha}\ds \int_{\Omega\times\{0\}}|w_{\epsilon,\eta}(z,0)|^{2^*_\alpha}dz.
 \end{align*}
 Next we claim the following\\
 \textbf{Claim:} There exists $t_\epsilon>0$ and $t_1, t_2>0$ (independent of $\epsilon, \lambda$) such that
 \begin{align*}
 g(t_\epsilon w_{\epsilon,\eta})=\displaystyle \sup_{t\geq 0}g(tw_{\epsilon,\eta})\;\textrm{and}\; \frac{d}{dt}g(tw_{\epsilon,\eta})\mid_{t=t_\epsilon}=0.
 \end{align*}
 and  $0<t_1\leq t_\epsilon\leq t_2<\infty$.\\
 \noi  Since $\displaystyle \lim_{t\rightarrow \infty} g(t)=-\infty$ and $\displaystyle \lim_{t\rightarrow 0^+} g(t)>0$.
 Therefore there exists $t_\epsilon>0$ such that
 \begin{align}\label{mm}
 g(t_\epsilon w_{\epsilon,\eta})=\displaystyle \sup_{t\geq 0}g(tw_{\epsilon,\eta})\;\textrm{and}\; \frac{d}{dt}g(tw_{\epsilon,\eta})\mid_{t=t_\epsilon}=0.
 \end{align}
 From \eqref{mm}, we get the following
 \begin{align}\label{der0}
 &{a}t\|w_{\epsilon, \eta}\|^2+{5\epsilon}t^{3}\|w_{\epsilon, \eta}\|^{4}=
 C t^{2^*_\alpha-2}\ds \int_{\Omega\times\{0\}} |w_{\epsilon,\eta}(z,0)|^{2^*_\alpha-1}dz+t^{2^*_\alpha-1}\ds \int_{\Omega\times\{0\}}|w_{\epsilon,\eta}(z,0)|^{2^*_\alpha}dz,
 \\ \label{ddn}
 &{a}\|w_{\epsilon, \eta}\|^2<
 C t^{2^*_\alpha-3}\ds \int_{\Omega\times\{0\}} |w_{\epsilon,\eta}(z,0)|^{2^*_\alpha-1}dz+C_1t^{2^*_\alpha-2}\ds \int_{\Omega\times\{0\}}|w_{\epsilon,\eta}(z,0)|^{2^*_\alpha}dz-{C_3\epsilon}t^{2}\|w_{\epsilon, \eta}\|^{4}.
\end{align}
 From \eqref{ddn}, it is clear that $t_\epsilon$ is bounded below that is there exist constants $t_1>0$, independent of $\varepsilon, \lambda$ such that $0<t_1\leq t_\epsilon$. Also from \eqref{der0}, we have
 \begin{align*}
 \frac{a}{t^2}\|w_{\epsilon, \eta}\|^2+{5\epsilon}\|w_{\epsilon, \eta}\|^{4}=
 C t^{2^*_\alpha-5}\ds \int_{\Omega\times\{0\}} |w_{\epsilon,\eta}(z,0)|^{2^*_\alpha-4}dz+t^{2^*_\alpha-1}\ds \int_{\Omega\times\{0\}}|w_{\epsilon,\eta}(z,0)|^{2^*_\alpha}dz.
 \end{align*}
 Since $2^*_\alpha>4$, there exists $t_2>0$, independent of $\varepsilon, \lambda$ such that $t_\epsilon\leq t_2<\infty$.\qed\\
 \noi Now from \cite{MR2911424, MR3117361} we get that the family $\{\eta w_\epsilon\}$ and its trace on $\{y=0\}$ namely $\{\eta u_\epsilon\}$ satisfy
 $$
 \|\eta w_\epsilon\|^2=\|w_\epsilon\|^2+O(\epsilon^{n-2\alpha}),\;\;
 \|\eta u_\epsilon\|^{2^*_\alpha-1}_{L^{2^*_\alpha-1}(\Omega)}\geq C\epsilon^\frac{n-2\alpha}{2}$$
  and
  $$\displaystyle \int_{\Omega\times\{0\}}|\eta w_\epsilon(z,0)|^{2^*_\alpha}dz=\ds \int_{\mathbb \R^n}\left(\frac{\epsilon}{\epsilon^2+|z|^2}\right)^n dz+O(\epsilon  ^n).
  $$
Now using these estimates we get,
 \begin{align*}
 \displaystyle \sup_{t\geq 0}g(tw_{\epsilon,\eta})&=g(t_\epsilon w_{\epsilon,\eta})\leq \displaystyle\left(\frac{a}{2}t_\epsilon^{2}\|w_{\epsilon}\|^2
 -\frac{1}{2^*_\alpha}t_\epsilon^{2^*_\alpha}\int_{\Omega\times\{0\}}|w_{\epsilon}(z,0)|^{2^*_\alpha}dz\right)
 +C_2{\epsilon}\\&-C\epsilon^\frac{n-2\alpha}{2}+O(\epsilon^{n-2\alpha})+O(\epsilon^n)\\
 &\leq \left(\frac{1}{2}-\frac{1}{2^*_\alpha}\right)(a\kappa_\alpha S(\alpha, n))^\frac{2^*_\alpha}{2^*_\alpha-2}+C_2\epsilon-C\epsilon^\frac{n-2\alpha}{2},
  \end{align*}
where $C_2, C>0$ are positive constants independent of $\epsilon, \lambda$. Now choose $\varepsilon=\lambda^\frac{2}{2-q}$. Then
\begin{align*}
C_2\epsilon-C\epsilon^\frac{n-2\alpha}{2}&=C_2\lambda^\frac{2}{2-q}-C\lambda^\frac{n-2\alpha}{2-q}
=\lambda^\frac{2}{2-q}\left( C_2-C\lambda ^\frac{n-2\alpha-2}{2-q}\right).
\end{align*}
Since $2\alpha<n<4\alpha$, implies $n-2\alpha-2<0$. Therefore choosing $0<\lambda<\lambda_4$ such that
$$
\left(C_2+\left(\frac{(4-q)\|f\|_{\frac{2^*_\alpha}{2^*_\alpha-q}}(\sqrt{\kappa_\alpha}S(\alpha, n))^{-q}}{4q}\right)^\frac{2}{2-q} \left(\frac{2-q}{2}\right)\left(\frac{2q}{a}\right)^\frac{q}{2-q}\right)\lambda_4^\frac{2+2\alpha-n}{2-q}<C
$$
and using $\mc I_{M,\lambda}(w_0)<0$, we get $\mathcal I_{M,\lambda}(w_{0} + r\;w_{\epsilon,\eta}) \leq c_0$. This proves the Proposition.
\end{proof}
\noindent \textbf{Proof of Theorem \ref{22mht1} (ii):} Consider the following
\begin{eqnarray*}
  W_{1} &=& \left\{w \in H_{0, L}^{1}(\mathcal C)\setminus\{0\} \big{|} \frac{1}{\|w\|}t^{-}\left(\frac{w}{\|w\|}\right) > 1\right\} \cup \{0\}, \\
   W_{2} &=& \left\{w \in H_{0,L}^{1}(\mathcal C)\setminus\{0\} \big{|} \frac{1}{\|w\|}t^{-}\left(\frac{w}{\|w\|}\right) < 1\right\}.
\end{eqnarray*}
Then $\mc N_{\lambda}^{-}$ disconnects $ H_{0,L}^{1}(\mathcal C)$ in two connected components $W_{1}$ and $W_{2}$
and $ H_{0,L}^{1}(\mathcal C)\setminus \mc N_{\lambda}^{-} = W_{1} \cup W_{2}.$ For each $w \in \mc N_{\lambda}^{+},$ we have $1< t_{\max}(w) < t^{-}(w).$
 Since $t^{-}(w) = \frac{1}{\|w\|}t^{-}\left(\frac{w}{\|w\|}\right),$ then $\mc N_{\lambda}^{+} \subset W_{1}.$
In particular, $w_0 \in W_{1}.$  Now we claim that there exists $l_0>0$ such that $w_{0} + l_{0}w_{\epsilon, \eta} \in W_{2}. $

 \noi First, we find a constant $c>0$ such that $0 < t^{-} \left(\frac{w_{0}+l\;w_{\epsilon,\eta}}{\|w_{0}+l\;w_{\epsilon,\eta}\|}\right)<c.$ Otherwise, there exists a sequence $\{l_{n}\}$ such that $l_{n} \rightarrow \infty$ and $t^{-}\left(\frac{w_{0}+l_{n}\;w_{\epsilon,\eta}}{\|w_{0}+l_{n}\;w_{\epsilon,\eta}\|}\right) \rightarrow \infty$ as $n \rightarrow\infty.$ Let $v_{n} = \frac{w_{0}+l_{n}\;w_{\epsilon,\eta}}{\|w_{0}+l_{n}\;w_{\epsilon,\eta}\|}.$ Since $t^{-}(v_{n})v_{n} \in \mc N_{\lambda}^{-} \subset \mc N_{\lambda}$ and by the Lebesgue dominated convergence theorem,
\begin{eqnarray*}
  \int_{\Omega\times\{0\}}v_{n}(z,0)^{2_\alpha^*} &=& \frac{1}{\|w_{0}+l_{n}\;w_{\epsilon,\eta}\|^{2_\alpha^*}}\int_{\Omega\times\{0\}} (w_{0}+l_{n}\;w_{\epsilon,\eta})(z,0)^{2_\alpha^*}dz \\
   &=& \frac{1}{\|w_{0}+l_{n}\;w_{\epsilon,\eta}\|^{2_\alpha^*}}\int_{\Omega\times\{0\}} \left(\frac{w_{0}}{l_{n}}+w_{\epsilon,\eta}\right)(z,0)^{2_\alpha^*} dz \\
   &\rightarrow& \frac{\int_{\Omega\times\{0\}}(w_{\epsilon, \eta}(z,0))^{2_\alpha^*}dz}{\|w_{\epsilon, \eta}\|^{2_\alpha^*}}\;\; \textrm{as}\;\; n \rightarrow \infty.
\end{eqnarray*}
Now
\begin{align*}
  \mathcal{I}_{M, \lambda}(t^{-}(v_{n})v_{n}) &= \frac{1}{2}a(t^{-}(v_n))^2\|v_n\|^2+\frac{1}{4}\epsilon(t^{-}(v_n))^4\|v_n\|^4-\frac{(t^{-}(v_{n}))^{q}}{q}\lambda \int_{\Omega\times\{0\}}f(z) v_{n}(z,0)^q\;dz\\& - \frac{(t^{-}(v_{n}))^{2^*_\alpha}}{2^*_\alpha}\int_{\Omega\times\{0\}} v_{n}(z,0)^{2^*_\alpha}dx \rightarrow - \infty\;\; \textrm{as}\;\; n \rightarrow \infty,
\end{align*}
this contradicts that $\mc I_{M,\lambda}$ is bounded below on $\mc N_{\lambda}.$ Let
\begin{equation*}
    l_{0} = \frac{|c^{2}-\|w_{0}\|^{2}|^{\frac{1}{2}}}{\|w_{\epsilon,\eta}\|} + 1,
\end{equation*}
then
\begin{align*}
  \|w_{0}+l_{0}w_{\epsilon,\eta}\|^{2} &= \|w_{0}\|^{2} + (l_{0})^{2}\|w_{\epsilon,\eta}\|^{2}+2l_{0}\langle w_{0}, w_{\epsilon,\eta}\rangle >\|w_{0}\|^{2} + |c^{2}-\|w_{0}\|^{2}| + 2l_{0}\langle w_{0}, w_{\epsilon,\eta}\rangle\\
   &>c^{2}>\left(t^{-}\left(\frac{w_{0}+l_{0}w_{\epsilon,\eta}}{\|w_{0}+l_{0}w_{\epsilon,\eta}\|}\right)\right)^{2}
\end{align*}
that is $w_{0}+l_{0}w_{\epsilon,\eta} \in W_{2}.$\\
 \noindent Now, we define $\beta = \displaystyle \inf_{\gamma \in \Gamma} \displaystyle \max_{s \in [0,1]} \mc I_{M,\lambda}(\gamma(s)),$
where $\Gamma = \{\gamma \in C([0,1]; H_{0, L}^{1}(\mathcal C)) | \gamma(0) = w_0 \;\textrm{and}\; \gamma(1) = w_0+l_{0}w_{\epsilon,\eta}\}$ {and $\lambda_{00}=\ds \min\{\lambda_0, \lambda_4\}$.}  Define a path $\gamma_{0} = w_0+t\;l_{0}w_{\epsilon,\eta}$ for $t \in [0,1],$ then $\gamma_{0} \in \Gamma$ and there exists $t_{0} \in (0,1)$
  such that $\gamma_{0}(t_{0}) \in \mc N_{\lambda}^{-},$ we have $\beta \geq \theta_{\lambda}^{-}.$ Moreover, by Lemma \ref{II},
    $\theta_{\lambda}^{-} \leq \beta < c_0$ {for $0<\lambda<\lambda_{00}$,} where $c_0$ is defined in Lemma \ref{II}.
Now similar to the Proposition \ref{prp1}, one can show the existence of Palais-Smale sequence
$\{w_{k}\} \subset \mc N_{\lambda}^{-}$. Since
\[
\theta_{\lambda}^{-} < \left(\frac{1}{2}-\frac{1}{2^*_\alpha}\right)(a\kappa_\alpha S(\alpha, n))^\frac{2^*_\alpha}{2^*_\alpha-2}-\lambda^{\frac{2}{2-q}}\left(\frac{(4-q)\|f\|_{\frac{2^*_\alpha}{2^*_\alpha-q}}
(\sqrt{\kappa_\alpha}S(\alpha, n))^{-q}}{4q}\right)^\frac{2}{2-q} \left(\frac{2-q}{2}\right)\left(\frac{2q}{a}\right)^\frac{q}{2-q},
\]
by Proposition \ref{crcmp}, there exist a subsequence ${w_k}$ and $w_1$ in  $H_{0, L}^{1}(\mathcal C)$ such that $w_{k} \rightarrow w_1$ strongly in $ H_{0, L}^{1}(\mathcal C).$ Now using Corollary \ref{nlclosed}, $w_1 \in \mc N_{\lambda}^{-}$ and $\mc  I_{M,\lambda}(w_{k}) \rightarrow \mc I_{M,\lambda}(w_1) = \theta_{\lambda}^{-}\; \textrm{as}\; k \rightarrow \infty.$ Therefore $w_1$ is also a solution. Moreover, $\mc I_{M,\lambda}(w)=\mc I_{M,\lambda}(|w|)$, we may assume that $w_1\geq 0$. Again using strong maximum principle (see \cite{MR2270163}) $w_1$ is positive solution
 of the problem $\eqref{E}$. Since $\mc N_\lambda ^+\cap \mc N_\lambda^-=\emptyset$, $w_0$ and $w_1$ are distinct. This proves Theorem \ref{22mht1}.

\end{document}